\newtheorem{thm}{Theorem}[section]
\newtheorem{prop}[thm]{Proposition}
\newtheorem{obs}[thm]{Observation}
\newtheorem{cor}[thm]{Corollary}
\newtheorem{lema}[thm]{Lemma}
\newtheorem{q}{Question}
\newcommand{\efface}[1]{}
\newcommand{\qed}{\hfill $\square$ \medskip}
\def\cp{\,\square\,}
\begin{document}

\title{Bootstrap percolation in strong products of graphs}

\author{
Bo\v stjan Bre\v sar$^{a,b}$, Jaka Hed\v zet$^{b,a}$
}

\date{}

\maketitle

\begin{center}
$^a$ Faculty of Natural Sciences and Mathematics, University of Maribor, Slovenia\\
\medskip

$^b$ Institute of Mathematics, Physics and Mechanics, Ljubljana, Slovenia\\
\medskip

\end{center}

%%%%%%%%%%%%%%ABSTRACT%%%%%%%%%%%%%%%%%%%%%%%%%%%%%%%%%%%%%%%%%%%%%%%%%%%%%%%%%%%%

\begin{abstract}
Given a graph $G$ and assuming that some vertices of $G$ are infected, the $r$-neighbor bootstrap percolation rule makes an uninfected vertex $v$ infected if $v$ has at least $r$ infected neighbors. The $r$-percolation number, $m(G,r)$, of $G$ is the minimum cardinality of a set of initially infected vertices in $G$ such that after continuously performing the $r$-neighbor bootstrap percolation rule each vertex of $G$ eventually becomes infected. In this paper, we consider  percolation numbers of strong products of graphs. If $G$ is the strong product $G_1\boxtimes \cdots \boxtimes G_k$ of $k$ connected graphs, we prove that $m(G,r)=r$ as soon as $r\le 2^{k-1}$ and $|V(G)|\ge r$. As a dichotomy, we present a family of strong products of $k$ connected graphs with the $(2^{k-1}+1)$-percolation number arbitrarily large. We refine these results for strong products of graphs in which at least two factors have at least three vertices.  In addition, when all factors $G_i$ have at least three vertices we prove that $m(G_1 \boxtimes \dots \boxtimes G_k,r)\leq 3^{k-1} -k$ for all $r\leq 2^k-1$, and we again get a dichotomy, since there exist families of strong products of $k$ graphs such that their $2^{k}$-percolation numbers are arbitrarily large. While $m(G\boxtimes H,3)=3$ if both $G$ and $H$ have at least three vertices, we also characterize the strong prisms $G\boxtimes K_2$ for which this equality holds. Some of the results naturally extend to infinite graphs, and we briefly consider percolation numbers of strong products of two-way infinite paths.  

\end{abstract}

\noindent{\bf Keywords:}  bootstrap percolation, strong product of graphs, infinite path.

\medskip
\noindent{\bf AMS Subj. Class.:} 05C35, 05C76, 60K35

\section{Introduction}
Given a graph $G$ and an integer $r \geq 2$, the {\em $r$-neighbor bootstrap percolation} is an update rule for the states of vertices in $G$. At any given time the state of a vertex is either {\em infected} or {\em uninfected}. From an initial set of infected vertices further updates occur simultaneously and in discrete intervals: any uninfected vertex with at least $r$ infected neighbors becomes infected, while infected vertices never change their state. Given a graph $G$, the smallest cardinality of a set of initially infected vertices, which results in all vertices of $G$ being infected after the $r$-neighbor bootstrap percolation process is finished, is the {\em $r$-percolation number}, $m(G,r)$, of $G$. 

The origins of bootstrap percolation come from physics of ferromagentism and go back to 1979~\cite{cha-1979}, while in 1998 the concept was considered in the context of spreading an infection in square grid networks~\cite{bal-1998}. Balogh and Bollob\'{a}s considered the random bootstrap percolation in hypercubes, where the main challenge is to find the tresholds for probabilities of vertices being initially set as infected in order to get all vertices of the graph infected with probability $1$ or $0$, respectively; see also a related study considering square grids~\cite{bal-2012}. Recently, Przykucki and Shelton considered $m(G,r)$ where $G$ is a $d$-dimensional square grid~\cite{prz-2020}, while Bidgoli et al.~\cite{bid-2021} considered bootstrap percolation of specific Hamming graphs, namely the Cartesian powers of complete graphs. The common feature of the above mentioned investigations of bootstrap percolation is that they all involve various types of Cartesian products of graphs. Beside the Cartesian product operation, the $r$-neighbor bootstrap percolation was considered with respect to the minimum degree of a graph~\cite{gun-2020}, and from the complexity point of view concerning the time (i.e., number of percolation steps) that takes to infect the entire graph~\cite{mar-2018}. 
%In this context, it is natural to consider also other graph products; in particular the strong product of graphs, which is the densest among all standard graph products, is a natural candidate for studying percolation. 

A special attention was given to the case $r=2$. For instance, Dairyko et al.~\cite{dai-2020} presented Ore-type and Chv\'{a}tal-type conditions related to degrees of a graph $G$ that enforce $m(G,2)=2$. Morris in~\cite{mor-2009} provided some bounds on the minimal bootstrap percolation sets in rectangular grids, where a set is minimal if it yields an infection of the whole graph while none of its proper subsets do it. As it turns out, the $2$-neighbor bootstrap percolation coincides with the concept from graphs convexity; notably, for the so-called $P_3$-convexity, as introduced by Centeno et al.~\cite{cen-2010}, the $P_3$-hull number of a graph $G$ is exactly $m(G,2)$. The $P_3$-convexity and the corresponding hull number were studied in comparison with other convexity parameters~\cite{cen-2013,coe-2019}, and were also considered in specific graph classes such as Kneser graphs~\cite{gri-2021} or Hamming graphs~\cite{bre-2020}.   Coelho et al.~\cite{coe-2019} performed a systematic study of the $P_3$-hull number in graph products. While the Cartesian product seems to be the most challenging one for the bootstrap percolation, for the strong product $G\boxtimes H$ of any non-trivial connected graphs $G$ and $H$ they proved that $m(G\boxtimes H,2)=2$. 

In this paper, we widely extend the study from~\cite{coe-2019} by investigating the $r$-percolation numbers in strong products of graphs. 
Strong product is one of the four standard graph products~\cite{produkti}. Its structure and high density provide several applicable properties. Shannon in~\cite{S} introduced a concept that arises in information theory, which is now known as the Shannon capacity, and is defined through the independence number of strong powers of a graph; see also~\cite{L} for the first major breakthrough in its study. It is also worth noting that every connected graph can be isometrically embedded into the strong products of paths~\cite{sch}, while the well-known Helly graphs are precisely the weak retracts of strong products of paths~\cite{NR}. The strong product of (two-way infinite) paths itself is a natural lattice (also known as the King grid), in which numerous invariants and their applications have been considered; see~\cite{CC,DG,FM} for a short selection of recent references. 

Additional motivation for studying bootstrap percolation in strong products of graphs comes from applications. In particular, in distributed computing one wants to construct a network so as to reduce the failure probability~\cite{K}. As we demonstrate in this paper, the density of the ``strong product network'' ensures the smallest possible value of the bootstrap percolation numbers in many strong products of graphs (see Section~1.2), hence it provides a convenient setting for such a network. 
In a different language, our results show that many strong products of graphs are ``$r$-bootstrap good'' in the sense of the recent study~\cite{B}. In some other applications, standard edges  represent short-range links, while long-range links may also be considered~\cite{Gao}; in such situations the Cartesian grid $\mathbb{Z}\cp\mathbb{Z}$ may as well be replaced by the strong grid as soon as all long-range links have range between $\sqrt{2}$ and $2$. 

Another perspective on our study can be obtained by comparing it with known studies on bootstrap percolation in Cartesian products of graphs. Even for hypercubes, which are in a sense the simplest Cartesian products of graphs, the $r$-neighbor percolation numbers have still not been determined when $r\ge 4$ (the exact values for $m(Q_n,2)$ can be found e.g.\ in~\cite{bre-2020} and for $m(Q_n,3)$ in~\cite{MN}). When $r\geq 4$, the asymptotic behavior of upper bounds on $m(Q_n,r)$ has been extensively explored~\cite{bbm,MN}, while good lower bounds seem to be more elusive, and there are considerable gaps between known lower and upper bounds. In light of these difficulties in Cartesian product graphs, obtaining the exact values of the $r$-neighbor bootstrap percolation numbers in (many) strong products of graphs, which is done in Section~\ref{sec:k-factors}, provides an interesting contrast. In many cases, we do this by finding appropriate sets of vertices that propagate, which unlike in the case of hypercubes reaches the lower bound. In Section~\ref{sec:two-factors}, where we deal with small values of $r$, we study structural properties of (strong product) graphs and/or obtain results that are dependent on percolation numbers of factor graphs. In particular, the result on strong prisms involves the graphs $G$ with $m(G,2)\in\{2,3\}$, which may be of help in better understanding of these graphs. Yet another different approach is needed in Section~\ref{sec:infinite} when dealing with powers of infinite paths, where we need to provide an argument showing that the percolation process advances on every step.

\subsection{Formal definitions and notation}

All graphs considered in this paper are simple and connected. Given a graph $G$ a vertex $x\in V(G)$ is a {\em cut-vertex} if $G-x$ is disconnected. The {\em neighborhood}, $N_G(v)$, of a vertex $v\in V(G)$ is the set of all vertices in $G$ adjacent to $v$, and the {\em closed neighborhood} of $v$, is defined as $N_G[v]=N_G(v)\cup\{v\}$. 
Vertices $u$ and $v$ in a graph $G$ are {\em (closed) twins} if $N_G[u]=N_G[v]$. That is, $u$ and $v$ are adjacent and have the same neighborhoods. 

We follow with a formal definition of the {\em $r$-neighbor bootstrap percolation}.
Let $A_0 \subseteq V(G)$ be an initial set of infected vertices, and, for every $t \geq 1$, let $$A_t = A_{t-1} \cup \lbrace v\in V(G):\, |N(v) \cap A_{t-1}|\geq r \rbrace .$$
The set $A_t \setminus A_{t-1}$ is referred to as vertices infected at time $t$. A vertex $v$ is infected before $u$ if $v \in A_t$, for some $t \geq 0$, while $u \notin A_t$ . We say that $A_0$ {\em percolates} (or is a {\em percolating set}) if $\bigcup\limits_{t\ge 0}^{} A_t= V(G)$. 

A natural extremal problem is to find a smallest percolating set $S=A_0$. For any graph $G$ and $r\geq 2$, let $$m(G,r)=\min \Big\{ |A_0|: \, A_0 \subseteq V(G), \: \bigcup\limits_{t=0}^{\infty}A_t = V(G)\Big\}.$$
Any percolating set $S$ satisfying $m(G,r) = |S|$ is thus a {\em minimum percolating set}, and $m(G,r)$ is the {\em $r$-percolation number} of $G$. Clearly, $m(G,r)\ge r$ for all $r\le |V(G)|$.

%The {\em cartesian product} of graphs $G$ and $H$ is a graph, denoted as $G \square H$, whose vertex set is $V(G) \times V(H)$ and two vertices $(g,h)$ and $(g',h')$ are adjacent precisely if $g=g'$ and $hh' \in E(H)$ or $h=h'$ and $gg' \in E(G)$. \\

The {\em strong product} of graphs $G$ and $H$ is the graph $G \boxtimes H$, whose vertex set is $V(G) \times V(H)$, and two vertices $(g,h)$ and $(g',h')$ are adjacent precisely if one of the following is true: 
\begin{enumerate}
\item[•] $g=g'$ and $hh' \in E(H)$, or
\item[•] $h=h'$ and $gg' \in E(G)$, or
\item[•] $gg' \in E(G)$ and $hh' \in E(H)$. 
\end{enumerate}
%From the definitions it follows that $G \square H$ is a subgraph of $G\boxtimes H$. \\

By $G^h = \lbrace (g,h): \, g \in V(G) \rbrace$ we denote the subset of $V(G \boxtimes H)$ called the {\em $G$-layer} on vertex $h$, and, by abuse of language, $G^h$ also denotes the subgraph of $G\boxtimes H$ induced by the vertices of the $G$-layer on $h$. Clearly, $G^h$ is isomorphic to $G$ for every $h\in V(H)$. Similarly, for $g \in V(G)$, the {\em $H$-layer} on vertex $g$ is $^g\!H = \lbrace (g,h) : \, h \in V(G) \rbrace$. 

Note that strong product operation is associative and commutative, and $G_1\boxtimes\cdots \boxtimes G_k$ has $V(G_1\boxtimes \cdots \boxtimes G_k)=V(G_1)\times \cdots \times V(G_k)$, and $(x_1,\ldots,x_k)(y_1,\ldots,y_k)\in E(G_1\boxtimes\cdots \boxtimes G_k)$ if and only if $x_i=y_i$ or $x_iy_i\in E(G_i)$ for all $i\in [k]$. If for a factor $G_i$ in the strong product $G_1\boxtimes\cdots \boxtimes G_k$ we have $|V(G_i)|=2$, we say that $G_i$ is an {\em edge-factor} of the strong product. Graph $K_1$ is said to be {\em trivial}, and if $G_i$ is a factor of a strong product with $|V(G_i)|=1$, $G_i$ is a {\em trivial factor}. In this paper, we will only consider strong products in which all factors are non-trivial.

\subsection{Main results and organization of the paper}
\label{sec:org}

In this paper, we consider percolation numbers of strong products of non-trivial graphs.
More precisely, we study $m(G_1\boxtimes\cdots\boxtimes G_k,r)$ depending on the number of factors $k$ and the threshold $r$.  In Section~\ref{sec:k-factors}, we study general upper bounds on the percolation numbers of strong products of graphs, which in many cases lead to exact values. In particular, it is often the case that the best possible value, $m(G,r)=r$, is obtained when $G$ is the strong product of $k$ factors and the threshold $r$ is bounded by a function of $k$. The results depend also on the number of factors in the strong product that have at least three vertices, and are illustrated in the following table:

\begin{center}
\begin{tabular}{|l|c|c|}
 \hline
  $r$ & $m(G,r)\le$ & \# non-edge factors \\ 
 \hline
 $\leq \mathbf{2^{k-1}}$ & $r$ & 1 \\ 
 \hline
 $\leq \mathbf{3\cdot2^{k-2}}$ & $r$ & 2 \\ 
 \hline
 $\leq 7\cdot2^{k-3}$ & $7\cdot2^{k-3}$ & 3\\ 
 \hline
 $\leq 2^{k}-1$ & $3^{k-1}-k$ & $k$\\ 
 \hline
\end{tabular}
\end{center}
The table gives the bounds on $m(G_1\boxtimes\cdots\boxtimes G_k,r)$, depending on the number of non-edge factors. Bounds in the first two lines are bold, which  is to indicate the fact that in all these cases $m(G,r)=r$. In particular, the first line is given by Corollary~\ref{cor:r<=2^(k-1)} and states that $m(G_1 \boxtimes \dots \boxtimes G_k,r)=r$ whenever $r \leq 2^{k-1}$ and $k\geq 2$. As a dichotomy we present an example showing that $m(G_1 \boxtimes \dots \boxtimes G_k,r)$ is not only greater than $r$, but can even be arbitrarily large as soon as $r=2^{k-1}+1$. Next, we prove in Theorem~\ref{thm:veliko_faktorjev_vsaj_dva_3} that $m(G,r)=r$ if $r \leq 3\cdot2^{k-2}$ and $G$ is the strong product of $k$ factors at least two of which are not $K_2$, and a similar dichotomy is proved also in this case. If there are at least three non-edge factors, we can further increase the threshold as shown in the third line (see~Theorem~\ref{thm:veliko_faktorjev_vsaj_trije_3}), while the last line presents an upper bound when all factors have at least three vertices  (see~Theorem~\ref{thm:veliko_faktorjev_vsi_3}).

To see that the bound $r\le 2^k-1$ in the last line of the above table is best possible, consider $G = G_1 \boxtimes \cdots \boxtimes G_k$, where $k \geq 2$ and $G_i$ are connected graphs such that $\delta(G_i) = 1$ for every $i \in [k]$. From the definition of the strong product it follows that $\delta(G)=2^k-1$. Therefore, whenever $r \geq 2^k$, every vertex of degree $\delta(G)$ must be included in the set of initially infected vertices. For instance, if $G_i$ is isomorphic to the star $K_{1,n}$ for every $i \in [k]$, then $G$ has $n^k$ vertices of degree $2^k-1$ and therefore $m(G,r) \geq n^k$ for every $r \geq 2^k$. Noting that $n \in \mathbb{N}$ can be arbitrarily large, we derive the following

\begin{obs}
\label{obs1} If $r \geq 2^k$, then for every integer $M$ there exist graphs $G_1,\ldots,G_k$ such that $m(G_1\boxtimes \cdots\boxtimes G_k,r)>M$. 
\end{obs}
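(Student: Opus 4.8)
The plan is to produce, for each target $M$, an explicit strong product $G=G_1\boxtimes\cdots\boxtimes G_k$ containing more than $M$ vertices that the percolation process can never infect on its own, forcing all of them into any percolating set. The governing principle is elementary and purely local: if a vertex $v$ has $\deg_G(v)<r$, then $|N_G(v)\cap A_{t-1}|\le\deg_G(v)<r$ for every $t\ge 1$, so $v$ never meets the infection rule. Consequently, whenever $A_0$ percolates, every vertex of degree strictly less than $r$ must already belong to $A_0$; in other words, $m(G,r)$ is at least the number of vertices of $G$ whose degree is below $r$.

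First I would record how degrees behave under the strong product. For a vertex $v=(x_1,\dots,x_k)$ of $G$, a neighbor is obtained by deciding in each coordinate $i\in[k]$ either to keep $x_i$ or to move to one of its $\deg_{G_i}(x_i)$ neighbors, with the single all-keep choice excluded; therefore
$$\deg_G(v)=\prod_{i=1}^{k}\bigl(\deg_{G_i}(x_i)+1\bigr)-1.$$
In particular, if each coordinate $x_i$ has degree $1$ in its factor, then $\deg_G(v)=2^k-1$.

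Next I would instantiate the factors so as to manufacture many such low-degree vertices. Taking $G_i\cong K_{1,n}$ for every $i\in[k]$, each factor has $n$ leaves, every leaf having degree $1$. A vertex of $G$ all of whose coordinates are leaves then has degree exactly $2^k-1$, and there are $n^k$ such vertices. Since the hypothesis $r\ge 2^k$ gives $2^k-1\le r-1<r$, each of these $n^k$ vertices has degree strictly below $r$ and must lie in $A_0$ by the principle above; hence $m(G,r)\ge n^k$. Choosing $n$ large enough that $n^k>M$ then yields $m(G_1\boxtimes\cdots\boxtimes G_k,r)>M$, as required.

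No serious obstacle arises here; the only step that actually carries the argument is the opening observation that a vertex of degree below $r$ is permanently uninfectable, which is what converts a static degree count into a lower bound on the percolation number. The remainder is the routine degree computation in the strong product together with the choice of stars, selected precisely because they supply an unbounded reservoir of minimum-degree vertices.
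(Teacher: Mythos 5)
Your proposal is correct and follows essentially the same route as the paper: both arguments note that a vertex of degree less than $r$ can never be infected by the rule and so must lie in any percolating set, and both take $G_i\cong K_{1,n}$ for all $i\in[k]$ to obtain $n^k$ vertices of degree $2^k-1<r$, giving $m(G,r)\ge n^k>M$. Your explicit degree formula for the strong product is a slightly more detailed justification of the step the paper states directly from the definition, but the substance is identical.
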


In Section~\ref{sec:two-factors}, we consider percolation numbers of strong products of graphs with only two factors. When $r=3$ and both $G$ and $H$ have order at least $3$, Theorem~\ref{thm:veliko_faktorjev_vsaj_dva_3} implies that $m(G \boxtimes H,3)=3$.  Thus we  consider the only remaining case for $m(G\boxtimes H,3)$, which is when one of the factors is $K_2$, and we prove a characterization of the graphs $G$ such that $m(G \boxtimes K_2,3)=3$. Furthermore, if $G$ and $H$ have the property that $m(G,2)=2$ and $m(H,2)=2$, then $m(G\boxtimes H,4)\le 5$, and if both $G$ and $H$ are not $K_2$, then $m(G\boxtimes H,5)$ can also be bounded from above (see Theorem~\ref{thm: implikacija za m(G krat H, 4)}).

In Section~\ref{sec:infinite}, we consider a natural extension of percolation to infinite graphs. Note that the original definition works also in the case $G$ is an infinite graph, where the only (silent) modification is that the initial set of infected vertices may need to be infinite in order to percolate, in which case we set the $r$-percolation number to be infinite. In this vein, Theorem~\ref{thm:veliko_faktorjev_vsi_3} can also be applied to strong products of infinite graphs. In particular, we infer that $m(\mathbb{Z}^{\boxtimes,n},2^{n}-1)\le 3^{n-1}-n$, where $\mathbb{Z}^{\boxtimes,n}$ is the strong product of $n$ two-way infinite paths $\mathbb{Z}$. It is natural to consider the {\em finiteness percolation threshold} of a graph $G$, which is the supremum of the set of thresholds $r$ for which $m(G,r)<\infty$; we denote this number by ${\rm fpt}(G)$. It is easy to prove that $2^{n}-1\le {\rm fpt}(\mathbb{Z}^{\boxtimes,n})\le 3^{n-1}$, and we establish that for $n\in \{2,3\}$ the upper bound is actually the exact value. In Section~\ref{sec:conclude}, we pose some open problems that arise from this study.

\section{Strong products of graphs with $k$ factors}
\label{sec:k-factors}

In this section, we consider upper bounds and exact results for the percolation number of the strong product $G_1\boxtimes\cdots\boxtimes G_k$, where for the threshold $r$ we have $r<2^k$. (As mentioned in Section~\ref{sec:org}, there is no general upper bound for $m(G_1\boxtimes\cdots\boxtimes G_k,r)$ when $r\ge 2^k$.) The results are divided into several subsections depending on the number of non-edge factors (that is, the number of factors with at least three vertices). 
The case when all factors of the strong product of graphs are $K_2$ is trivial, hence we first consider the most general case when there is at least one non-edge factor. 

\subsection{At least one non-edge factor}

We start by considering the $r$-neighbor bootstrap percolation in strong products of $k$ graphs, where the threshold $r$ is at most by $2^{k-1}$.  Clearly, $m(G,r)\ge r$ for any graph $G$ with  $|V(G)|\ge r\ge 2$. As we will see in the next result(s), if $r\le 2^{k-1}$, then $m(G,r)=r$ where $G$ is a strong product of graphs with $k$ factors. 

\begin{thm} \label{thm:produkt_k_faktorjev}
If $2\le k\leq r \leq 2^{k-1}$ and $G$ is the strong product $G_1 \boxtimes \dots \boxtimes G_k$, where $G_i$ are connected graphs so that $|V(G)| \geq r$, then $m(G,r)=r$. 
\end{thm}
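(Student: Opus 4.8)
The plan is to prove the lower and upper bounds separately. The lower bound $m(G,r)\ge r$ is immediate from $|V(G)|\ge r$, so the whole content is to exhibit a percolating set of size exactly $r$. I would do this by induction on the number of factors $k$, writing $G=G'\boxtimes G_k$ with $G'=G_1\boxtimes\cdots\boxtimes G_{k-1}$, and using the result of Coelho et al.\ that $m(G_1\boxtimes G_2,2)=2$ as the base case $k=2$ (where necessarily $r=2$). The inductive hypothesis is the theorem itself applied to the $(k-1)$-factor product $G'$, and one checks along the way that every threshold fed to $G'$ lands in the admissible range.

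The structural engine of the argument is the high minimum degree of a strong product. Since every factor is non-trivial and connected, $\delta(G')=\prod_{i=1}^{k-1}(\delta(G_i)+1)-1\ge 2^{k-1}-1\ge r-1$. Consequently, if an entire $G'$-layer $G'^u$ is infected, then every vertex $(a,y)$ of a neighbouring layer (with $y$ adjacent to $u$ in $G_k$) has at least $|N_{G'}[a]|=\deg_{G'}(a)+1\ge 2^{k-1}\ge r$ infected neighbours inside $G'^u$, so the whole layer $G'^y$ becomes infected at once. Because $G_k$ is connected, a single fully infected layer therefore percolates all of $G$. This reduces the problem to infecting one layer, or a pair of adjacent layers.

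For the construction I would split on the size of $r$. If $r\le 2^{k-2}$, the induction hypothesis gives a percolating set $T$ of $G'$ of size $r$ under threshold $r$; placing it in a single layer $G'^u$ makes that layer fill (percolation confined to one layer is identical to percolation in $G'$, since initially no vertex outside the layer is infected), after which the previous paragraph finishes the job. If $2^{k-2}<r\le 2^{k-1}$, a single layer can no longer carry the threshold, and I would use a doubling across two adjacent layers $G'^u,G'^w$ with $uw\in E(G_k)$. Setting $s=\lceil r/2\rceil\le 2^{k-2}$ and taking $T$ percolating $G'$ under threshold $s$, I seed $T$ into both layers. The key claim, proved by induction on the $G'$-infection time of $T$, is that whenever $a$ becomes infected in $G'$ then both $(a,u)$ and $(a,w)$ become infected in $G$: each such vertex collects its $s$ infecting neighbours once from its own layer and once from the other layer (exploiting that $(a,u)(b,w)$ is an edge whenever $ab\in E(G')$), for a total of $2s\ge r$ infected neighbours, so both layers fill and then spread.

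The main obstacle is parity: for odd $r$ the symmetric doubling uses $2\lceil r/2\rceil=r+1$ seed vertices, one more than allowed. To recover the exact value I would delete one seed vertex $(t_0,w)$ and exploit two sources of slack, namely the diagonal edge $(t_0,u)(t_0,w)$ between the two layers and the minimum-degree surplus. Concretely, the coupled induction still forces every $(a,u)$ and every $(a,w)$ with $a\ne t_0$ to become infected: in the step where $t_0$ is one of the $s$ infecting neighbours of some $a$, the vertex $(t_0,u)$, which is present, replaces the missing $(t_0,w)$, leaving exactly $r=2s-1$ infected neighbours, still enough to infect both $(a,u)$ and $(a,w)$. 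Once the layer $G'^u$ is complete, the leftover vertex $(t_0,w)$ sees $\deg_{G'}(t_0)+1\ge 2^{k-1}\ge r$ infected neighbours in $G'^u$ and is infected as well. Verifying that this single bookkeeping exception propagates correctly through the coupled induction, and checking that all threshold and size inequalities ($s\ge 2$, $k-1\le s\le 2^{k-2}$, and $|V(G')|\ge 2^{k-1}\ge s$) remain valid so that the induction hypothesis genuinely applies to $G'$, is the delicate part of the argument.
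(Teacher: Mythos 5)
Your proof is correct, but it takes a genuinely different route from the paper's. The paper makes no induction on the number of factors and no case split on $r$: it fixes a BFS ordering $v_1^{i},v_2^{i},\dots$ of each factor, observes that the $2^k$ vertices with every coordinate in $\{v_1^{i},v_2^{i}\}$ induce a clique --- so \emph{any} $r$ of them serve as the seed and infect the whole ``corner cube'' --- and then runs a single induction on $\sum_i t_i$ showing that a vertex whose first coordinate is $v_{t_1+1}^{1}$ and whose remaining coordinates lie in already-infected BFS prefixes is adjacent to the $2^{k-1}\ge r$ infected vertices $(p(v^{1}_{t_1+1}),y^2,\dots,y^k)$ with $y^i\in\{x^i,p(x^i)\}$. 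That argument is uniform in $r$, parity-free, and self-contained; in particular it \emph{yields} the Coelho et al.\ result as the case $k=2$, whereas your induction consumes it as the base case. Your route buys modularity instead: the min-degree spreading lemma (a fully infected $G'$-layer floods all of $G$ since $\delta(G')+1\ge 2^{k-1}\ge r$) and the threshold-halving doubling across two adjacent layers are clean reusable devices, close in spirit to what the paper later does for products with several non-edge factors (Theorems~\ref{thm:veliko_faktorjev_vsaj_dva_3} and~\ref{thm:veliko_faktorjev_vsaj_trije_3}), and they explain conceptually why the admissible threshold doubles with each added factor. For the record, the arithmetic you flagged as delicate does hold: in your second case $r>2^{k-2}$ forces $s=\lceil r/2\rceil\ge 2^{k-3}+1\ge k-1$, while $s\le 2^{k-2}$ and $|V(G')|\ge 2^{k-1}>s$ are immediate, so the inductive hypothesis genuinely applies; and your odd-$r$ patch counts correctly ($2(s-1)+1=r$ infected neighbours for both copies of each newly infected vertex, with $(t_0,w)$ recovered at the end from the full layer $G'^u$). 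Two small points to tighten in a write-up: ``percolation confined to one layer is identical to percolation in $G'$'' should be stated as domination rather than identity (vertices outside the layer may become infected early and assist, which only accelerates the process), and replacing the citation in the base case by the two-line direct argument for two adjacent seeds in $G_1\boxtimes G_2$ would make your proof self-contained as well.
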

\begin{proof}
Let $k\ge 2$ and $r\in \{k,\ldots, 2^{k-1}\}$ (note that $k\geq 2$ implies $k\le 2^{k-1}$).
Consider the strong product $G_1\boxtimes \dots \boxtimes G_k$, where factors are connected, and the order of the product is at least $r$. 
For all $i \in [k]$, let $|V(G_i)| = n_i$. Since $G_i$ is connected, it contains a BFS-tree. Let us denote the vertices of $G_i$ by $v_1^i,v_2^i,\dots , v_{n_i}^i$ such that $v_1^i$ is the root of the BFS tree, and for each $j$, $2 \leq j \leq n_i$, let $p(v_j^i)=v_{\ell}^i$, be the parent of $v_j^i$, where $\ell < j$. In particular, the parent (and a neighbor) of $v_2^i$ is $v_1^i$, while $v_3^i$ has $v_2^i$ or $v_1^i$ as the parent (and a neighbor).  

Since $m(G,r) \geq r$ is clear, it suffices to find a set $S\subset V(G)$ of size $r$ that percolates. Let $S$ be any subset of the set $\lbrace (v_i^1,v_i^2,\dots , v_i^k): \, i \in \lbrace 1,2 \rbrace \rbrace$, such that $|S| = r$. Such a set $S$ always exists because $r \leq  2^{k-1} < 2^k$.

First note that every vertex $x = (x^1,\dots, x^k)$, where $x^i \in \lbrace v_1^i, v_2^i \rbrace$ for all $ i \in [k]$, gets infected, since $x$ is in $S$ or is a neighbor of all vertices in $S$. We will use induction to prove that eventually every vertex of $G$ gets infected. Let $t_i\in [n_i]$ for all $i\in [k]$.  We claim that every vertex $x=(x^1,\dots, x^k)\in V(G)$, where $x^i \in \lbrace v_j^i: \, j\le t_{i}\}$ for all $i \in [k]$, gets infected. The induction is on $\sum_{i=1}^k{t_i}$ where for the base case we can take $\sum_{i=1}^k{t_i}=2k$; that is $t_i=2$ for all $i\in [k]$.  Thus the base of induction is that all vertices $x=(x^1,\dots, x^k)$, whose coordinates, $x^i$, are in $\{ v_j^i:\, j\le 2\}$ are infected, which has already been proved.

In the inductive step we assume that every vertex $x=(x^1,\dots, x^k)\in V(G)$, where $x^i \in \lbrace v_j^i: \, j\le t_{i}\}$ for all $i \in [k]$ and some $t_i \leq n_i$, is infected. 
In addition, we may assume there exists an index $s\in [k]$ such that $t_s<n_s$, and without loss of generality, let $s=1$. Consider a vertex $x = (x^1,\dots, x^k)$, where $x^1 = v_{t_1+1}^1$ and $x^j \in \lbrace v_1^j, v_2^j, \dots ,v_{t_j}^j \rbrace$ for all $j \neq 1$. Note that $x$ is adjacent to vertices $(p(v_{t_1+1}^1),y^2,\dots,y^k)$ where $y^i \in \lbrace x^i, p(x^i) \rbrace$ for $2 \leq i \leq k$. Since these vertices are all infected, $x$ has at least $2^{k-1}\ge r$ infected neighbors, therefore it gets infected. We have thus proved that all the vertices $x=(v_\ell^1,x^2,\dots, x^k)\in V(G)$, where $\ell\in [t_1+1]$ and $x^i \in \lbrace v_j^i: \, j\in [t_{i}]\}$ for all $i >1$, get infected, which concludes the proof of the inductive step. 
\qed
\end{proof}

\medskip

When considering the graph $G=G_1\boxtimes \cdots \boxtimes G_k$ in the $r$-neighbor bootstrap percolation when $r<k$, we can write $G=G_1 \boxtimes \dots \boxtimes G_{r-1} \boxtimes (G_r \boxtimes \dots \boxtimes G_k)$, and consider $G_r \boxtimes \dots \boxtimes G_k$ as a sole factor. Hence, applying Theorem \ref{thm:produkt_k_faktorjev}, we infer the following 

\begin{cor}
\label{cor:r<=2^(k-1)}
If $k \geq 2$ and $G = G_1 \boxtimes \dots \boxtimes G_k$ for non-trivial connected graphs $G_i$, then $m(G,r)=r$ for all $r \leq 2^{k-1}$. 
\end{cor}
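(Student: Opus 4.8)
The plan is to deduce the corollary directly from Theorem~\ref{thm:produkt_k_faktorjev}, which already yields $m(G,r)=r$ whenever $2\le k\le r\le 2^{k-1}$. The only range of thresholds left uncovered by that theorem is $2\le r<k$ (here the hypothesis $r\le 2^{k-1}$ is automatic, since $r<k\le 2^{k-1}$). For this remaining range I would exploit the associativity of the strong product to reduce the number of factors until it matches the threshold $r$, so that the theorem becomes applicable.

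Concretely, when $r<k$ I would bundle the last factors into a single one, writing $G=G_1\boxtimes\cdots\boxtimes G_{r-1}\boxtimes H$ with $H=G_r\boxtimes\cdots\boxtimes G_k$. Since the strong product of connected graphs is connected and the strong product of non-trivial graphs is non-trivial, $H$ is again a non-trivial connected graph; this presentation therefore exhibits $G$ as a strong product of exactly $r$ non-trivial connected factors. Because the strong product is associative, this regrouping does not alter the graph $G$ at all, hence $m(G,r)$ is unchanged. I would then apply Theorem~\ref{thm:produkt_k_faktorjev} to this new presentation, taking the number of factors to be $r$ and the threshold to be $r$. The required hypothesis chain becomes $2\le r\le r\le 2^{r-1}$, where $r\ge 2$ holds by assumption and $r\le 2^{r-1}$ holds for every $r\ge 1$. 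The size condition $|V(G)|\ge r$ is automatic, since all factors are non-trivial and $k\ge 2$ give $|V(G)|\ge 2^{k}>2^{k-1}\ge r$. Thus Theorem~\ref{thm:produkt_k_faktorjev} yields $m(G,r)=r$, and together with its direct application in the case $k\le r\le 2^{k-1}$ this settles all $r\le 2^{k-1}$.

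I do not expect a genuine obstacle here, as the argument is essentially a bookkeeping reduction; the step deserving the most care is simply confirming that the bundled factor $H$ still meets the conditions imposed on factors (connectedness and non-triviality) and verifying the arithmetic inequality $r\le 2^{r-1}$, which is exactly what allows Theorem~\ref{thm:produkt_k_faktorjev} to fire after the regrouping. The matching lower bound $m(G,r)\ge r$ need not be reproved, since it follows from the general fact that $m(G,r)\ge r$ whenever $|V(G)|\ge r$.
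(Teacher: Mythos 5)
Your proposal is correct and follows exactly the paper's own argument: when $r<k$, use associativity to regroup $G_r\boxtimes\cdots\boxtimes G_k$ into a single non-trivial connected factor, then apply Theorem~\ref{thm:produkt_k_faktorjev} with $r$ factors (the case $k\le r\le 2^{k-1}$ being a direct application). Your write-up merely makes explicit the checks (connectedness and non-triviality of the bundled factor, $r\le 2^{r-1}$, $|V(G)|\ge r$) that the paper leaves implicit.
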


By letting $k=2$ in Theorem~\ref{thm:produkt_k_faktorjev}, and noting that the $2$-neighbor bootstrap percolation coincides with $P_3$-hull convexity, we get the result of Coelho~\cite[Theorem 3.1]{coe-2019} regarding the $P_3$-hull number of the strong product of two graphs. 

Since the case $r\le 2^{k-1}$ is completely resolved, we continue by investigating strong products of $k$ graphs and the $r$-neighbor bootstrap percolation, where $r>2^{k-1}$. The following results yields a dichotomy to the corollary above by showing that as soon as $r>2^{k-1}$, the $r$-percolation number of the strong product of $k$ factors can be arbitrarily large.

\begin{thm} \label{thm:k_faktorjev_rvelik}
If $n\ge 3$, then $m(C_n \boxtimes K_2 \boxtimes \dots \boxtimes K_2, 2^{k-1} +1) = 2^{k-1}-1 + \lceil \frac{n}{2} \rceil$, where $K_2$ appears as a factor $(k-1)$-times.
\end{thm}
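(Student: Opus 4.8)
The plan is to first simplify the graph. Since the strong product of $k-1$ copies of $K_2$ is the complete graph $K_{2^{k-1}}$, the graph in question is isomorphic to $C_n\boxtimes K_m$ with $m=2^{k-1}$, and the threshold is $r=m+1$. I picture this as $n$ cliques (``columns'') $K_m$ placed at the vertices of $C_n$, where two columns sitting on adjacent vertices of the cycle induce a complete join (every vertex of one is adjacent to every vertex of the other), so each vertex has degree $3m-1$. The target value becomes $m-1+\lceil n/2\rceil$, and I note in passing that the argument will not use that $m$ is a power of two.

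The crucial reduction is the following observation about the dynamics. For an uninfected vertex lying in column $i$, its infected neighbours are exactly the currently infected vertices of columns $i-1$, $i$ and $i+1$; their number equals $|I_{i-1}|+|I_i|+|I_{i+1}|$ and does not depend on which vertex of column $i$ we look at (here $I_j$ denotes the set of infected vertices of column $j$, and the uninfected vertex itself is not counted). Hence at every step either all uninfected vertices of a column become infected at once or none do, so each column is always either at its initial count $s_i=|I_i\cap A_0|$ or ``saturated'' (fully infected). The whole process therefore collapses to a one-dimensional process on $\mathbb{Z}_n$ in which column $i$ saturates as soon as $|I_{i-1}|+|I_i|+|I_{i+1}|\ge m+1$. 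I would establish this first, since both bounds are then stated purely in terms of the $s_i$ and their total $S=\sum_i s_i$.

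For the upper bound I would exhibit an explicit percolating set: fully infect one column (cost $m$) and place a single infected vertex in every second column around the cycle. Using the reduced rule, a saturated column together with a seed one or two steps ahead saturates the intervening column, so the infection sweeps once around $C_n$; a short count gives $S=m+(\lceil n/2\rceil-1)=m-1+\lceil n/2\rceil$.

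The lower bound is the main obstacle, because it must combine two competing requirements --- a local ``kickstart'' worth about $m$ and a global ``carrier'' worth about $n/2$ --- without double counting. I would isolate two facts. First, no two cyclically consecutive columns can both be empty: if columns $i$ and $i+1$ are both empty, then saturating $i$ while $i+1$ is still empty requires $|I_{i-1}|\ge m+1$, which is impossible, and symmetrically for $i+1$, so the two columns deadlock; hence the empty columns form an independent set of $C_n$ and the number $P$ of non-empty columns satisfies $P\ge\lceil n/2\rceil$. Second, the first saturation forces either a full column or a triple of consecutive columns with $s_{a-1}+s_a+s_{a+1}\ge m+1$; writing $S=P+E$ with $E=\sum_i\max(s_i-1,0)$, such a triple contributes excess $E\ge m-2+z$, where $z$ is the number of empty columns among the three. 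When $z\ge 1$ (or when a full column exists) this already gives $E\ge m-1$, and adding $P\ge\lceil n/2\rceil$ finishes. The delicate case is $z=0$: then the three consecutive columns are all non-empty, so the empty columns avoid three consecutive vertices of $C_n$ and thus lie in an induced path on $n-3$ vertices, which forces $P\ge\lceil n/2\rceil+1$ and exactly compensates for the weaker bound $E\ge m-2$. In every case $S=P+E\ge m-1+\lceil n/2\rceil$, matching the construction. I expect the bookkeeping in this last case, together with the careful verification of the independent-set and path bounds for all parities of $n$ (including the small values $n=3,4$), to be where the real work lies.
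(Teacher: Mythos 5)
Your proposal is correct and follows essentially the same route as the paper: the identical upper-bound construction (one fully infected $K_{2^{k-1}}$-layer plus a single seed in every second layer), and a lower bound resting on the same two facts, namely that no two consecutive $K_{2^{k-1}}$-layers can both be seed-free (the paper's $|S\cap V(H_i)|\ge 1$) and that the three layers around the first newly infected vertex must together contain at least $2^{k-1}+1$ seeds. The only real difference is bookkeeping: the paper combines these facts by packing disjoint layer-pairs outside that triple, giving $|S|\ge (2^{k-1}+1)+\left\lfloor \frac{n-3}{2}\right\rfloor$ directly, whereas you use the decomposition $S=P+E$ with a case analysis on $z$ (including the refined bound $P\ge\lceil n/2\rceil+1$ when $z=0$), which is slightly longer but handles the initially-full-column degenerate case explicitly, and your column-collapse reduction to one-dimensional dynamics also supplies a clean justification of the paper's ``it is easy to see that $S$ percolates.''
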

\begin{proof}
Note that $K_2 \boxtimes \dots \boxtimes K_2$ is isomorphic to the complete graph $K_{2^{k-1}}$ and let $G = C_n \boxtimes K_{2^{k-1}}$. %Therefore we can observe $m(G, 2^{k-1}+1)$. 
Denote $V(C_n)=\lbrace v_1, \dots , v_n \rbrace$ and $V(K_{2^{k-1}})=\lbrace 1,2,\dots ,2^{k-1} \rbrace$. We start by proving the lower bound $m(G, 2^{k-1} +1)\ge 2^{k-1}-1 + \lceil \frac{n}{2} \rceil$.

Let $S$ be a minimum percolating set of $G$. 
For all $i \in [n]$, let $H_i=G[\lbrace (v_i,p),(v_{i+1},p) : \, p \in [2^{k-1}] \rbrace]$ be the subgraph of $G$, where $i$ is taken with respect to modulo $n$. Clearly, $H_i$ is the union of two $H$-layers $^{v_i}\!K_{2^{k-1}}$ and $^{v_{i+1}}\!\!K_{2^{k-1}}$ and is isomorphic to the complete graph on $2^k$ vertices. In addition, every vertex in $H_i$ has exactly $2^{k-1}$ neighbors in $G-V(H_i)$. Since $r=2^{k-1}+1$, we derive that 
\begin{equation}
\label{eq:SinH_i}
|S\cap V(H_i)|\ge 1, \textrm{ for all } i\in [n]\,.
\end{equation}
Without loss of generality, renaming the vertices of $G$ if necessary, we may assume that $(v_2,1)\notin S$ is a vertex infected at step $1$ of the percolation process. Hence, there are at least $2^{k-1}+1$ initially infected vertices within $^{v_1}\!K_{2^{k-1}}\cup \, ^{v_2}\!K_{2^{k-1}}\cup \, ^{v_3}\!K_{2^{k-1}}=H_1\cup H_2$. When $n\in\{ 3,4\}$, we have $|S|\ge  2^{k-1}-1 + \lceil \frac{n}{2} \rceil$, 
which proves the desired lower bound. Let $n\ge 5$. Note that $|\{H_4,\ldots, H_{n-1}\}|\ge 1$, and using \eqref{eq:SinH_i}, we get $$|S|=|S\cap (H_1\cup H_2)|+|S\cap (H_4\cup\cdots\cup H_{n-1})|\ge 2^{k-1}+1+\left\lfloor \frac{n-3}{2} \right\rfloor.$$
If $n$ is even, then $\lfloor \frac{n-3}{2} \rfloor = \frac{n}{2}-2$, and if $n$ is odd, then $\lfloor \frac{n-3}{2}\rfloor = \frac{n+1}{2} -2$. In both cases, we get $m(G,2^{k-1}+1) \geq 2^{k-1} -1 + \lceil \frac{n}{2} \rceil$, as desired. 

To prove the upper bound, $m(G, 2^{k-1} +1)\le 2^{k-1}-1 + \lceil \frac{n}{2} \rceil$, we consider two possibilities for a percolating set $S$ with respect to the parity of $n$. If $n$ is even, let $$S = \,^{v_1}\!K_{2^{k-1}} \cup \lbrace (v_3,1),(v_5,1),\dots ,(v_{n-1},1) \rbrace,$$ while if $n$ is odd, let $$S =  \,^{v_1}\!K_{2^{k-1}}  \cup \lbrace (v_3,1),(v_5,1),\dots ,(v_n,1) \rbrace.$$ In either case, $|S| = 2^{k-1}-1 + \lceil \frac{n}{2} \rceil$. It is easy to see that $S$ percolates, and so the proof is complete. \qed
\end{proof}

\subsection{At least two non-edge factors}

Theorem~\ref{thm:k_faktorjev_rvelik} shows that if the strong product has only one non-edge factor, Theorem~\ref{thm:produkt_k_faktorjev} is best possible. If there are at least two non-edge factors, we can improve Theorem~\ref{thm:produkt_k_faktorjev} as follows.

\begin{thm}
\label{thm:veliko_faktorjev_vsaj_dva_3}
Let $G$ be the strong product $G_1 \boxtimes \dots \boxtimes G_k$ of connected graphs $G_i$, $i\in [k]$. If $|V(G)|\ge r$ and at least two of the factors have order at least $3$, then $m(G,r)=r$ for all $2 \leq r \leq 3\cdot 2^{k-2}.$ 
\end{thm}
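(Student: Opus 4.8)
The lower bound $m(G,r)\ge r$ is immediate, so the whole content is the upper bound, and my plan is to push the box-propagation argument from the proof of Theorem~\ref{thm:produkt_k_faktorjev} a little further by cashing in the two factors of order at least $3$. Relabel so that $G_1$ and $G_2$ are the two factors with $|V(G_i)|\ge 3$, and fix a BFS ordering $v_1^i,v_2^i,\dots,v_{n_i}^i$ in every factor, as in Theorem~\ref{thm:produkt_k_faktorjev}. The guiding numerical identity is that $3\cdot 2^{k-2}$ is a product of a factor $3$ and a factor $2^{k-2}$: the $2^{k-2}$ will come, exactly as before, from the self/parent choice $y^i\in\{x^i,p(x^i)\}$ in the $k-2$ coordinates other than the two special ones, while the extra factor $3$ (rather than $2$) is harvested from a single vertex $b^\ast$ of $G_2$ whose closed neighborhood meets the three-deep BFS prefix $B=\{v_1^2,v_2^2,v_3^2\}$ in all three vertices. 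Such a $b^\ast$ always exists once three BFS levels are present: either $v_2^2$ has its parent $v_1^2$ and a child $v_3^2$ in $B$, or $v_1^2$ has two children $v_2^2,v_3^2$ in $B$.

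Concretely, I would choose a seed set of size $r\le 3\cdot 2^{k-2}$ inside the box $B_0=\{v_1^1,v_2^1\}\times\{v_1^2,v_2^2,v_3^2\}\times\prod_{i\ge 3}\{v_1^i,v_2^i\}$, which has $3\cdot 2^{k-1}$ vertices, so there is plenty of room. The first task is to exhibit such a seed set that infects all of $B_0$; this specializes, in the case $k=2$ and $r=3$, to three suitably spread seeds (using all three levels of $G_2$) forcing the remaining cells of a $2\times 3$ block, which already indicates where the hypothesis ``order at least $3$'' enters. Propagation then runs in two regimes. While the extent in $G_2$ is frozen at the three levels $B$, I advance the product box in the coordinate $G_1$ and in the coordinates $G_3,\dots,G_k$; a newly added vertex $x$ projecting to $b^\ast$ in the $G_2$-coordinate is adjacent to the parent in the advanced coordinate, to the three cells supplied by $N_{G_2}[b^\ast]\cap B$, and to the self/parent choices in the remaining $k-2$ coordinates, which is exactly $1\cdot 3\cdot 2^{k-2}$ already-infected neighbors. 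Once a full factor has been infected the box is genuinely ``thick'' there, and I switch back to the ordinary doubling propagation of Theorem~\ref{thm:produkt_k_faktorjev} to fill out the remaining levels of $G_2$, which is now possible because a single thick direction already supplies the required neighbors.

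The step I expect to be the main obstacle is the neighbor count at vertices that project to a \emph{leaf} level of $B$ in the special coordinate, i.e.\ to some $b$ with $|N_{G_2}[b]\cap B|=2$ (for instance $v_1^2$ or $v_3^2$ when $G_2$ is a path): these see only $2\cdot 2^{k-2}=2^{k-1}$ neighbors of the clean product form, one factor short. The remedy I plan to use is a two-wave ordering within each fiber and each advanced slab: cells over $b^\ast$ (and over any vertex with a full closed neighborhood in $B$) are infected first, and the remaining ``boundary'' cells are then infected using these freshly infected interior cells as additional neighbors, spreading along the connected graph $G_2[B]$. Making this rigorous for all $k$ simultaneously -- so that the wave supplies precisely the missing neighbors rather than running out, and so that it meshes with the cube-doubling in the coordinates $G_3,\dots,G_k$ -- is the delicate part; the bulk of the proof will be an induction, on the sum of the BFS depths reached in the various coordinates, verifying that every vertex infected at each stage indeed has at least $3\cdot 2^{k-2}$ infected neighbors at the moment of its infection. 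The handling of low-order or edge factors among $G_3,\dots,G_k$, and of the initial infection of $B_0$ from only $r$ seeds, will then be routine once the count is established.
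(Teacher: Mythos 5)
Your overall architecture --- a three-level structure $B$ in one special factor supplying the factor $3$, self/parent doubling in the other coordinates supplying $2^{k-2}$, and a two-wave ordering to rescue the cells over the ``leaf'' levels of $B$ --- is the same as the paper's, and your phase-1 neighbor counts are correct. The genuine gap is your final step. After phase 1 the infected set is $V(G_1)\times B\times \prod_{i\ge 3}V(G_i)$, and you claim you can ``switch back to the ordinary doubling propagation of Theorem~\ref{thm:produkt_k_faktorjev}'' to fill out the remaining levels of $G_2$ because ``a single thick direction already supplies the required neighbors''. This count fails: advancing one level in coordinate $2$, a new vertex $x=(x^1,v^{(2)}_{t+1},x^3,\ldots,x^k)$ has only $\prod_{i\neq 2}|N_{G_i}[x^i]|$ infected neighbors of product form, which equals $2^{k-1}$ whenever every $x^i$, $i\neq 2$, is a leaf of $G_i$, and $2^{k-1}<3\cdot 2^{k-2}$ for every $k\ge 2$. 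Thickness of the $G_1$-direction adds nothing for such vertices, since $|N_{G_1}[x^1]|=2$ when $x^1$ is a leaf, so the process stalls there.

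This gap is not cosmetic, because as written your argument never uses $|V(G_1)|\ge 3$ (your box $B_0$ contains only two levels of $G_1$, and all propagation in coordinate $1$ is by doubling); if it were valid, it would prove the statement with only one factor of order at least $3$, which is false. Indeed, by Theorem~\ref{thm:k_faktorjev_rvelik}, $m(C_n\boxtimes K_2\boxtimes\cdots\boxtimes K_2,\,2^{k-1}+1)$ is arbitrarily large, while $2^{k-1}+1\le 3\cdot 2^{k-2}$ for all $k\ge 2$; running your two phases on this product (with $G_2=C_n$ as the special factor) stalls exactly at your phase 2. The repair is to invoke the second order-$\ge 3$ factor there: take an arbitrary path $P''$ on three vertices in $G_1$ (every vertex of $G_1$ lies on one, since $G_1$ is connected of order at least $3$) and adjacent pairs $W_j\subseteq V(G_j)$ for $j\ge 3$, and advance along coordinate $2$ slab-by-slab over $V(P'')\times\{\cdot\}\times\prod_{j\ge 3}W_j$, with your own two-wave ordering now keyed to the center of $P''$: center cells see $3\cdot 2^{k-2}$ previously infected neighbors, and end cells see $2\cdot 2^{k-2}$ old plus $2^{k-2}$ freshly infected ones. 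This is precisely the paper's third step (with the roles of the two large factors exchanged relative to your setup); with that substitution, and an explicit seed (the paper deletes up to $2^{k-2}-1$ ``center'' cells from its full seed to handle $2^{k-1}<r<3\cdot 2^{k-2}$), your plan becomes the paper's proof.
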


\begin{proof}
The result for $r \leq 2^{k-1}$ follows from Theorem \ref{thm:produkt_k_faktorjev}. 
We start with the proof of the statement, when $r=3\cdot 2^{k-2}$. (The cases when $r\in \{2^{k-1}+1,\ldots, 3\cdot 2^{k-2}-1\}$ will be dealt with in the final paragraph of this proof.)

Let $n_1\ge n_2 \geq 3$, and $n_i \geq 2$ for all $i\in \{3,\ldots, k\}$. Denote $V(G_i) = \lbrace v_1^{(i)} , v_2^{(i)},\dots ,v_{n_i}^{(i)} \rbrace$ for all $i \in [k]$. Since $G_1$ is connected of order at least $3$, it contains a path on three vertices (not necessarily induced). Assume without loss of generality, renaming vertices if necessary, that
$P:v_1^{(1)}v_2^{(1)}v_3^{(1)}$ is a path in $G_1$.  For each $i\in \{3,\ldots, k\}$, let $F_i=\{v_1^{(i)}, v_2^{(i)}\}$ consist of two adjacent vertices. Let 
\begin{equation}
\label{eq:1}
S=V(P)\times\{v_1^{(2)}\}\times \prod_{i=3}^{k}{F_i}.
\end{equation}
Clearly, $|S|=3\cdot 2^{k-2}$. We claim that $S$ percolates. 

Firstly, assuming that vertices in $S$ are infected, we show that vertices in $V(P)\times V(G_2) \times \prod_{i=3}^{k}{F_i}$ become infected. Since $G_2$ is connected, it suffices to show that the set $V(P)\times\{v_s^{(2)}\}\times \prod_{i=3}^{k}{F_i}$ being infected implies that the set $V(P)\times\{v_t^{(2)}\}\times \prod_{i=3}^{k}{F_i}$ becomes infected, where $v_t^{(2)}$ is adjacent to $v_s^{(2)}$ in $G_2$. Indeed, each vertex $(v_2^{(1)},v_{t}^{(2)},v_{i_3}^{(3)},\dots ,v_{i_k}^{(k)})$, where $i_p \in [2]$ for all $p\in \{3,\ldots,k\}$, is adjacent to all vertices in $V(P)\times\{v_s^{(2)}\}\times \prod_{i=3}^{k}{F_i}$. Since $|V(P)\times\{v_s^{(2)}\}\times \prod_{i=3}^{k}{F_i}|=3 \cdot 2^{k-2}$, we infer that vertices $(v_2^{(1)},v_{t}^{(2)},v_{i_3}^{(3)},\dots ,v_{i_k}^{(k)})$, where $i_p \in [2]$ for all $p\in \{3,\ldots,k\}$, become infected. 
Now, consider any vertex $(v_1^{(1)},v_{t}^{(2)},v_{i_3}^{(3)},\dots , v_{i_k}^{(k)})$, where $i_p \in [2]$ for all $p\in \{3,\ldots,k\}$, and note that it is adjacent to all vertices $(v_{i_1}^{(1)},v_{s}^{(2)},v_{i_3}^{(3)},\dots ,v_{i_k}^{(k)})$, where $i_p \in [2]$ for all $p\in [k]\setminus\{2\}$, as well as to all (newly infected) vertices $(v_2^{(1)},v_{t}^{(2)},v_{i_3}^{(3)},\dots ,v_{i_k}^{(k)})$, where $i_p \in [2]$ for all $p\in\{3,\ldots, k\}$. Thus, altogether,  $(v_1^{(1)},v_{t}^{(2)},v_{i_3}^{(3)},\dots , v_{i_k}^{(k)})$  is adjacent to $2^{k-1} + 2^{k-2} = 3\cdot 2^{k-2}$ infected neighbors, and so it gets infected. 
By symmetry, we infer the same fact about vertices $(v_3^{(1)},v_{t}^{(2)}, v_{i_3}^{(3)},\dots ,v_{i_k}^{(k)})$, where $i_p \in [2]$ for all $p\in\{3,\ldots, k\}$. Hence,  vertices in $V(P)\times V(G_2) \times \prod_{i=3}^{k}{F_i}$ become infected, as claimed.  

Secondly, we prove that all vertices in $V(P)\times\prod_{i=2}^{k}{V(G_i)}$ become infected. For this purpose, we claim that for any $i\in \{2,\ldots,k-1\}$, vertices in $V(P)\times\prod_{j=2}^{i+1}{V(G_j)}\times \prod_{j={i+2}}^{k}{F_j}$ get infected assuming that vertices in $V(P)\times\prod_{j=2}^{i}{V(G_j)}\times \prod_{j={i+1}}^{k}{F_j}$ are infected. (Since vertices in $V(P)\times V(G_2) \times \prod_{i=3}^{k}{F_i}$ became infected as proved in the previous paragraph, the truth of this claim implies the statement that vertices in $V(P)\times\prod_{i=2}^{k}{V(G_i)}$ become infected.)
By using the assumption, note that all vertices in $V(P)\times\prod_{j=2}^{i}{V(G_j)}\times \{v_1^{(i+1)}\} \times \prod_{j={i+2}}^{k}{F_j}$ are infected. Now, consider arbitrary adjacent pairs of vertices $v_{j_1}^{(j)}$ and $v_{j_2}^{(j)}$ in $G_j$, where $j\in \{2,\dots,i\}$, and let $W_j=\{v_{j_1}^{(j)},v_{j_2}^{(j)}\}$. Since $G_{i+1}$ is connected, it suffices to show that the set $V(P)\times\prod_{j=2}^{i}{W_j}\times \{v_s^{(i+1)}\} \times \prod_{j={i+2}}^{k}{F_j}$ being infected implies that the set $V(P)\times\prod_{j=2}^{i}{W_j}\times \{v_t^{(i+1)}\} \times \prod_{j={i+2}}^{k}{F_j}$, where $v_t^{(i+1)}$ is adjacent to $v_s^{(i+1)}$ in $G_{i+1}$, becomes infected. To see this, one can use analogous arguments as in the proof in the previous paragraph. Since pairs of vertices in $W_j$ were chosen arbitrarily, we infer that vertices in $V(P)\times\prod_{i=2}^{k}{V(G_i)}$ become infected, as claimed. 

Thirdly, let $P':v_{i_1}^{(2)}v_{i_2}^{(2)}v_{i_3}^{(2)}$ be an arbitrary path in $G_2$, and note that any vertex of $G_2$ lies on such a path, since $G_2$ is connected and $|V(G_2)|\ge 3$. Further, let $W_j=\{v_{j_1}^{(j)},v_{j_2}^{(j)}\}$ consist of arbitrary adjacent vertices in $G_j$, for all $j\in \{3,\dots,k\}$. Note that all vertices in $\{v_1^{(1)}\}\times V(P')\times\prod_{j=3}^{k}{W_j}$ are already infected. Again, by using symmetric arguments as earlier, one can prove that vertices in $\{v_t^{(1)}\}\times V(P')\times\prod_{j=3}^{k}{W_j}$ become infected assuming that vertices in $\{v_s^{(1)}\}\times V(P')\times\prod_{j=3}^{k}{W_j}$ are infected, where $v_s^{(1)}v_t^{(1)}\in E(G_1)$. Since, $G_1$ is connected, we deduce that $V(G_1)\times V(P')\times\prod_{j=3}^{k}{W_j}$ gets infected. Noting that vertices in $P'$ and $W_j$, where $j\in \{3,\dots,k\}$, were arbitrarily chosen, we get that $V(G)$ becomes infected, and $S$ is indeed a percolating set. 

Finally, let $r\in \{2^{k-1}+1,\ldots, 3\cdot 2^{k-2}-1\}$. Note that it suffices to find a set $S'$ in $G$ of size $r$ such that $S$, as defined in~\eqref{eq:1}, becomes infected assuming that $S'$ is infected. Now, let $S'=S\setminus U$, where $U$ consists of any $3\cdot 2^{k-2}-r$ vertices in $\{v_2^{(1)}\} \times \{v_1^{(2)}\}\times\prod_{i=3}^k {F_i}$. (Since $r\in \{2^{k-1}+1,\ldots, 3\cdot 2^{k-2}-1\}$, we have $|U|\le 3\cdot 2^{k-2}-(2^{k-1}+1)<2^{k-2}$, hence $U$ is well defined.) Since $S$ is isomorphic to $P_3\boxtimes K_{2^{k-2}}$, vertices in $\{v_2^{(1)}\} \times \{v_1^{(2)}\}\times\prod_{i=3}^k {F_i}$ are adjacent to all other vertices in $S$. Hence, if $S'$ is initially infected, $S$ becomes infected, and so $S'$ is a percolating set. 
\qed 
\end{proof}

\medskip

We have shown that $r\le 3\cdot 2^{k-2}$ implies $m(G,r)=r$, whenever $G$ is a strong product of $k$ graphs at least two of which have order at least $3$. Now, the natural question is whether $m(G,r)$ is bounded from above also if $r>3\cdot 2^{k-2}$, and the next results shows this is not always true. On the contrary, $m(G,3\cdot 2^{k-2} +1)$ can be arbitrarily large.

\begin{thm}
If $n\ge 3$, then $m(C_n \boxtimes C_n \boxtimes K_2 \boxtimes \dots \boxtimes K_2, 3\cdot 2^{k-2} +1) \geq \lceil \frac{n}{2} \rceil$, where $K_2$ appears as a factor $(k-2)$ times.
\end{thm}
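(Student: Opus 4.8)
The plan is to mimic the lower-bound argument of Theorem~\ref{thm:k_faktorjev_rvelik}: I would identify subgraphs each of whose vertices has exactly $r-1=3\cdot 2^{k-2}$ neighbors on the outside, so that each such subgraph is forced to contain an initially infected vertex, and then turn the resulting family of constraints into a counting bound. First I would use the isomorphism $K_2\boxtimes\cdots\boxtimes K_2\cong K_{2^{k-2}}$ (the product of $k-2$ copies) to write $G\cong C_n\boxtimes C_n\boxtimes K_{2^{k-2}}$. Writing the first cycle as $u_1,\dots,u_n$ and the second as $w_1,\dots,w_n$, I would group the vertices according to their first coordinate into planes $P_i=\{u_i\}\times V(C_n)\times V(K_{2^{k-2}})$, and for each $i\in[n]$ (indices taken modulo $n$) define the slab $D_i=P_i\cup P_{i+1}$, the union of two consecutive planes.

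The crucial computation is the boundary degree of a slab. A vertex $(u_i,w_j,z)$ has closed neighborhood $N_{C_n}[u_i]\times N_{C_n}[w_j]\times V(K_{2^{k-2}})$, so its only neighbors lying outside $D_i$ are those with first coordinate $u_{i-1}$; these are the vertices $(u_{i-1},w_b,z')$ with $w_b\in N_{C_n}[w_j]$ and $z'\in V(K_{2^{k-2}})$, exactly $3\cdot 2^{k-2}$ of them. By symmetry, a vertex with first coordinate $u_{i+1}$ has exactly $3\cdot 2^{k-2}$ neighbors outside $D_i$ (those with first coordinate $u_{i+2}$); for $n\ge 3$ the index $u_{i-1}$ (resp.\ $u_{i+2}$) is distinct from $u_i$ and $u_{i+1}$, so every vertex of $D_i$ has exactly $3\cdot 2^{k-2}=r-1$ neighbors in $G-V(D_i)$. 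Consequently, if $S$ were disjoint from $V(D_i)$, then the first vertex of $D_i$ to become infected would have all of its infected neighbors outside $D_i$, hence at most $r-1<r$ of them, a contradiction. Therefore $|S\cap V(D_i)|\ge 1$ for every $i\in[n]$.

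Finally I would convert these constraints into the claimed bound. Setting $s_i=|S\cap P_i|$, the inequalities $|S\cap V(D_i)|\ge 1$ read $s_i+s_{i+1}\ge 1$ for all $i$ (modulo $n$), which says that no two consecutive planes are both empty. Thus the set of indices $i$ with $s_i\ge 1$ meets every edge of the cycle $C_n$, that is, it is a vertex cover of $C_n$; since the minimum vertex cover of $C_n$ has size $\lceil n/2\rceil$, at least $\lceil n/2\rceil$ of the planes contain a vertex of $S$. Hence $|S|=\sum_{i=1}^n s_i\ge\lceil n/2\rceil$, giving $m(G,3\cdot 2^{k-2}+1)\ge\lceil n/2\rceil$.

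I would expect the only genuinely delicate point to be the boundary-degree count: one must slab along a single cycle direction (two consecutive planes in the first $C_n$) so that the number of outside neighbors is \emph{exactly} $r-1$; slabbing in a different way, or across the $K_{2^{k-2}}$ factor, would give a different count and break the forcing argument. Once the slabs are chosen correctly, the reduction to the minimum vertex cover of $C_n$ is routine.
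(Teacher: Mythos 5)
Your proposal is correct and follows essentially the same route as the paper: the slabs $D_i$ are exactly the paper's subgraphs $H_i$ (two consecutive planes along the first cycle), the boundary-degree count of exactly $3\cdot 2^{k-2}=r-1$ outside neighbors is the same, and the forcing conclusion $|S\cap V(D_i)|\ge 1$ for all $i$ is identical. The only difference is cosmetic: the paper leaves the final counting step implicit, while you spell it out via a minimum vertex cover of $C_n$ (one could equally just sum the $n$ inequalities $s_i+s_{i+1}\ge 1$ to get $2|S|\ge n$).
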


\begin{proof}
Note that $K_2 \boxtimes \dots \boxtimes K_2$ is isomorphic to the complete graph $K_{2^{k-2}}$ and let $G = C_n \boxtimes C_n \boxtimes K_{2^{k-2}}$. Denote $V(C_n)=\{ v_1,\dots ,v_n \}$ and $V(K_{2^{k-2}}) = \{1,\dots ,2^{k-2} \}.$ For all $i \in [n]$ let $H_i=G[\{ (v_i,v_j,p),(v_{i+1},v_j,p) : \, j \in [n], p \in [2^{k-2}] \}]$, where $i$ is taken with respect to modulo $n$. Finally let $S$  be a minimum percolating set of $G$.

Note that every vertex $(v_i,v_j,p) \in V(H_i)$ has exactly $3\cdot 2^{k-2}$ neighbors in $G-V(H_i)$. More precisely, $(v_i,v_j,p)$ is adjacent to vertices $(v_{i-1},v_{j'},p')$, where $j' \in \{j-1,j,j+1 \}$ and $p' \in [2^{k-2}]$. We infer that $|S \cap H_i| \geq 1,$ for all $i\in [n]$, which in turn implies that $|S| \geq \lceil \frac{n}{2} \rceil$.
\qed
\end{proof}

%%%%%%%%%%%%%%%%%%%%%%%%%%%%%%%%%
%%%%%%%%%%%%%%%%%%%%%%%%%%%%
\subsection{At least three non-edge factors}
%%%%%%%%%%%%%%%%%%%%%%%%%%%%%%
%%%%%%%%%%%%%%%%%%%%%%%%%%%%%%%%%%%
We can further improve Theorem~\ref{thm:veliko_faktorjev_vsaj_dva_3} if there are at least three non-edge factors. However, in this case we only obtain an upper bound.

\begin{thm}\label{thm:veliko_faktorjev_vsaj_trije_3}
Let $G$ be the strong product $G_1 \boxtimes \dots \boxtimes G_k$ of connected graphs $G_i$, $i\in [k]$. If at least three of the factors have order at least $3$, then $m(G,r)\leq 7\cdot 2^{k-3}$ for all $r\leq 7\cdot 2^{k-3}$.
\end{thm}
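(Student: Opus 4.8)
The plan is to produce, for the largest relevant threshold $r=7\cdot 2^{k-3}$, a percolating set of size $7\cdot 2^{k-3}$; since any set that percolates at threshold $7\cdot 2^{k-3}$ also percolates at every smaller threshold, this gives the bound for all $r\le 7\cdot 2^{k-3}$. Let $G_1,G_2,G_3$ be three factors of order at least $3$. In each $G_i$, $i\in\{1,2,3\}$, I pick a vertex $z_i$ of degree at least $2$ (it exists since $G_i$ is connected of order $\ge 3$) together with two neighbours $z_i',z_i''$, and set $N_i=\{z_i',z_i,z_i''\}$; in each remaining factor $G_j$, $j\ge 4$, I fix an edge $F_j=\{u_j,w_j\}$. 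The seed is
$$ S=X\times\prod_{j=4}^{k}F_j,\qquad X=\bigl(\{z_1\}\times N_2\times N_3\bigr)\setminus\{(z_1,z_2,z_3),(z_1,z_2',z_3)\}, $$
so $|X|=7$ and $|S|=7\cdot 2^{k-3}$.

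The driving observation is that tensoring with a complete graph multiplies neighbour counts: if $A$ percolates a graph $H$ at threshold $\rho$, then $A\times V(K_m)$ percolates $H\boxtimes K_m$ at threshold $\rho m$, because a vertex $(h,i)$ with $h\notin A$ has exactly $|N_H(h)\cap A|\cdot m$ infected neighbours in $A\times V(K_m)$. Applying this with $H=G_1\boxtimes G_2\boxtimes G_3$ and $K_m$ the complete graph on the $2^{k-3}$ pairwise-adjacent vertices of $\prod_{j\ge 4}F_j$, and noting that neighbour counts in $G$ dominate those in the induced subgraph $H\boxtimes K_{2^{k-3}}$, it suffices to settle the base case that $X$ percolates $H$ at threshold $7$, i.e. $m(G_1\boxtimes G_2\boxtimes G_3,7)\le 7$. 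Once this ``core'' phase fills $V(H)\times\prod_{j\ge 4}F_j$, a second phase advances through the remaining vertices of the small factors.

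For the base case I first fill the $3\times 3\times 3$ box $N_1\times N_2\times N_3$. The two ``poles'' $(z_1',z_2,z_3)$ and $(z_1'',z_2,z_3)$ are each adjacent to all nine vertices of the face $\{z_1\}\times N_2\times N_3$ (since $z_2,z_3$ dominate $N_2,N_3$), hence to the $7$ seeds, so both get infected; then the two deleted face vertices refill — the centre $(z_1,z_2,z_3)$ sees its $7$ infected face neighbours plus the two poles, and $(z_1,z_2',z_3)$ sees its $5$ infected face neighbours plus the two poles — so the whole face is infected. From a \emph{full} $3\times 3$ face one fills the adjacent parallel face by a short cascade: its centre sees the $9\ge 7$ face vertices, each edge-midpoint gets $6$ from the old face plus $1$ from the new centre, and each corner gets $4$ from the old face plus $3$ newly infected neighbours. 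This completes the box, and the same ``full face $\to$ adjacent face'' cascade, run along BFS-orders of $G_1,G_2,G_3$, fills all of $H$. The general advancing step (the second phase, and the part of the base case that leaves the box) is the heart of the bookkeeping and mirrors the final paragraph of the proof of Theorem~\ref{thm:veliko_faktorjev_vsaj_dva_3}: to advance in any factor $G_\ell$ I keep two of the three large factors (both different from $\ell$) thick as paths $P_3$ and keep all other factors as edges, so a central new-layer vertex sees support of size $9\cdot 2^{k-3}>7\cdot 2^{k-3}$; the within-layer cascade (edge-midpoints $6\cdot 2^{k-3}+2^{k-3}$, corners $4\cdot 2^{k-3}+3\cdot 2^{k-3}$) then fills the rest of the layer. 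Sweeping the three large factors in turn, and afterwards the small factors, with two large factors always held thick, infects every vertex of $G$.

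The main obstacle is the base-case escape. The naive seed — seven vertices of a $2\times 2\times 2$ box (equivalently the $2^k$-cube times edges), which does fill that box — gets stuck, because every face of such a box has only $2\times 2=4<7$ vertices, so no new layer is reachable at threshold $7$. The construction must therefore ignite a genuinely two-dimensional $3\times 3$ face, whose $9\ge 7$ cross-section is what powers every later cascade; verifying that $7$ seeds suffice to light such a face (via the two poles and the refill of the two deleted vertices) is the delicate point, and it is exactly here that the hypothesis of \emph{three} large factors is used. The remaining difficulty is purely organizational — checking that the sweep order can always display two filled large factors as thick $P_3$'s while advancing in the third factor or in a leaf-heavy small factor — and is handled precisely as in Theorem~\ref{thm:veliko_faktorjev_vsaj_dva_3}.
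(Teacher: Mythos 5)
Your proof is correct and follows essentially the same route as the paper's: the same seed (a $3\times 3$ face minus its centre and one adjacent vertex, tensored with edges in the small factors), the same ``full face infects the adjacent parallel face'' cascade with matching neighbour counts, and the same sweep re-orienting which two large factors are held as $P_3$'s. The only differences are organizational --- you extract the $K_m$-tensoring observation and the face-to-face cascade as explicit lemmas, where the paper verifies the same counts in a step-by-step table and then repeats them with the $\prod_{i\ge 4}F_i$ factor attached.
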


\begin{proof}
Since $m(G,r) \leq c$ for some $r$ and $c$ implies $m(G,r')\leq c$ for all $r' \leq r$, it suffices to show that $m(G,7\cdot 2^{k-3})= 7\cdot 2^{k-3}$. 

Let $G = G_1 \boxtimes \cdots \boxtimes G_k$ and denote $V(G_i) = \lbrace v_1^{(i)} , v_2^{(i)},\dots ,v_{n_i}^{(i)} \rbrace$ for all $i \in [k]$, and $n_1\ge n_2\ge n_3\ge 3$. Consider the paths $P^{(i)}:v_1^{(i)}v_2^{(i)}v_3^{(i)}$ in $G_i$ for all $i \in \{1,2\}$. Since $G_1$ and $G_2$ are connected of order at least $3$, such paths exist (note that $P^{(i)}$ is not necessarily induced). First, let $k=3$, and 
\begin{equation}
\label{eq:S'}
S'= V(P^{(1)}) \times V(P^{(2)}) \times  \{v_1^{(3)} \} \setminus \{(v_2^{(1)},v_2^{(2)},v_1^{(3)}), (v_1^{(1)},v_2^{(2)},v_1^{(3)})\}.
\end{equation}
Clearly, $|S'|=7$. Thus consider the spreading of infection in the $7$-bootstrap percolation. The first few infection steps are presented in the following table. (In the table, we use a simplified notation for the vertices. Notably, vertex $(v_i^{(1)},v_j^{(2)},v_{\ell}^{(3)})$ is written as $(i,j,\ell)$.) 
\begin{center}\label{Tabela1}
\begin{tabular}{ | m{2.3cm} | m{3.4cm}| m{8,5cm} | }
 \hline
 Infection step & Newly infected vertices & Infected by neighbors \\ 
 \hline
 Step 1 & $(2,2,1),(2,2,2)$ & $S'$ \\ 
 \hline
 Step 2 & $(3,2,2)$ & Step 1 and $(2,1,1),(3,1,1),(3,2,1),(3,3,1),(2,3,1)$ \\ 
 \hline
 Step 3 & $(2,1,2)$ and $(2,3,2)$ & Steps~$1$, $2$, and~$(1,1,1),(2,1,1),(3,1,1),(3,2,1)$ and $(1,3,1),(2,3,1),(3,3,1),(3,2,1)$, respectively.\\ 
 \hline
 Step 4.1 & $(1,2,1),(1,2,2)$ & Steps $1, 3$, and  $(1,1,1),(2,1,1),(1,3,1),(2,3,1)$ \\ 
 \hline
  Step 4.2 & $(3,1,2),(3,3,2)$ & Steps $1, 2$, and~$(2,1,1),(3,1,1),(3,2,1),(2,1,2)$ and  $(2,3,1),(3,3,1),(2,3,2),(3,2,1)$, respectively. \\ 
 \hline
 Step 5 & $(1,1,2)$ and $(1,3,2)$& Steps $1$, $4.1$, and $(2,1,2),(1,1,1),(2,1,1)$ and $(2,3,1)$, $(1,3,1),(3,1,1)$, respectively. \\ 
 \hline
\end{tabular}
\end{center}

After these infection steps, the vertices in $V(P^{(1)}) \times V(P^{(2)}) \times \{v_1^{(3)},v_2^{(3)}\}$ are all infected. Next, by replacing $v_1^{(3)}$ and $v_2^{(3)}$ in the previous steps with arbitrary adjacent vertices $v_s^{(3)}$ and $v_t^{(3)}$ in $G_3$ we deduce the following claim: $V(P^{(1)}) \times V(P^{(2)}) \times \{v_s^{(3)}\}$ being infected implies that vertices in $V(P^{(1)}) \times V(P^{(2)}) \times \{v_t^{(3)}\}$ also become infected. Since $G_3$ is connected this proves that $S'$ eventually infects all vertices in $V(P^{(1)}) \times V(P^{(2)}) \times V(G_3)$. 

Let $P'$ be an arbitrary path on three vertices in $G_3$ ($P'$ is well defined because $|V(G_3)| \geq 3$). By using analogous arguments as in the previous paragraph (where the roles of the first and the third coordinate are reversed) we derive that infected vertices $\{v_1^{(1)}\} \times V(P^{(2)}) \times V(P')$ also infect $V(G_1) \times V(P^{(2)}) \times V(P')$. Since $P'$ is arbitrary and every vertex in $V(G_3)$ lies on some path $P_3$, this implies that $V(G_1) \times V(P^{(2)}) \times V(G_3)$ becomes infected. 

Finally let $P''$ be an arbitrary path on three vertices  in $G_1$. Then infected vertices $V(P'') \times \{ v_1^{(2)} \} \times V(P')$ eventually infect $V(P'') \times V(G_2) \times V(P')$. Once again since $P'$ and $P''$ can be chosen arbitrarily, this implies that $V(G_1) \times V(G_2) \times V(G_3) = V(G)$ becomes infected.

Now let $k\geq 4$. For each $i\in \{4,\ldots, k\}$, let $F_i=\{v_1^{(i)}, v_2^{(i)}\}$ consist of two adjacent vertices. Let
$$S=S'\times \prod_{i=4}^k F_i,$$
where $S'\subset V(G_1)\times V(G_2)\times V(G_3)$ as defined in~\eqref{eq:S'}. 

Let $(x_1,x_2,x_3) \in V(G_1\boxtimes G_2 \boxtimes G_3)$ be a vertex infected in Step $1$ from the above table. Then, any vertex $(x_1,x_2,x_3,u_4,\ldots,u_k)$, where $u_i\in F_i$ for all $i\ge 4$, is adjacent to all vertices in $S$, therefore it has $7\cdot 2^{k-3}$ neighbors, and becomes infected. By using analogous constructions for Steps 2-5, we deduce that eventually every vertex in $V(P^{(1)}) \times V(P^{(2)}) \times \{v_1^{(3)},v_2^{(3)}\} \times \prod _{i=4}^k F_i$ becomes infected. This in turn implies that vertices in $V(P^{(1)}) \times V(P^{(2)}) \times V(G_3)\times \prod _{i=4}^k F_i$ become infected, and in the similar way as above, we then infer that  vertices in $V(G_1) \times V(G_2) \times V(G_3)\times \prod _{i=4}^k F_i$ also become infected. 

By continuing this process (choosing paths on three vertices in two coordinates among the first three coordinates, and exchanging the coordinate in which the infection is spread), we finally derive (in a similar way as in the proof of Theorem~\ref{thm:veliko_faktorjev_vsaj_dva_3}) that all vertices of $G$ eventually become infected, and thus $S$ percolates.   
\qed
\end{proof}

%%%%%%%%%%%%%%%%%%%%%%%%%%%%%%%%%
%%%%%%%%%%%%%%%%%%%%%%%%%%%%
\subsection{No $K_2$ factors}
%%%%%%%%%%%%%%%%%%%%%%%%%%%%%%
%%%%%%%%%%%%%%%%%%%%%%%%%%%%%%%%%%%

In this subsection we deal with the last line of the table in Section~\ref{sec:org}. 

\begin{thm}\label{thm:veliko_faktorjev_vsi_3}
Let $k \geq 4$ and $G$ be the strong product $G_1 \boxtimes \dots \boxtimes G_k$ of connected graphs $G_i$, $i\in [k]$. If $|V(G_i)| \geq 3$ for all $i \in [k]$, then $m(G,r)\leq 3^{k-1} -k$ for all $r\leq 2^k-1$.
\end{thm}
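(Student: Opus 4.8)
The plan is to reduce to the largest threshold and then percolate in two stages: first infect a copy of $P_3^{\boxtimes k}$ sitting inside $G$, and then let this ``core'' spread over all of $G$. Since $m(G,r)\le m(G,r')$ whenever $r\le r'$, it suffices to treat $r=2^k-1$. In each factor I would choose (as in the previous proofs) a path $P^{(i)}:v_1^{(i)}v_2^{(i)}v_3^{(i)}$, which exists because $G_i$ is connected with $|V(G_i)|\ge 3$, and set $C=\prod_{i=1}^k V(P^{(i)})$, so that the subgraph induced by $C$ is isomorphic to $P_3^{\boxtimes k}$. As the initial set I would take the \emph{middle slab} $M=\prod_{i=1}^{k-1}V(P^{(i)})\times\{v_2^{(k)}\}$, with $|M|=3^{k-1}$, from which a suitable set $R$ of $k$ ``interior'' vertices (for instance including the central vertex $(v_2^{(1)},\dots,v_2^{(k)})$, which is adjacent to every other vertex of $C$) is deleted, giving $|S|=|M\setminus R|=3^{k-1}-k$. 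The hypothesis $k\ge 4$ enters already here through the inequality $3^{k-1}-k\ge 2^k-1$, which holds precisely for $k\ge 4$ (it fails for $k=3$); this is exactly what guarantees that the high-degree vertices of the outer slabs still ignite even after $k$ vertices have been removed from $M$.

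The technical heart I would isolate as a single lemma: for $k\ge 4$, in $P_3^{\boxtimes(k-1)}\boxtimes K_2$ a fully infected $P_3^{\boxtimes(k-1)}$-slab infects the other slab under threshold $2^k-1$. To prove it one classifies a vertex $x$ of the uninfected slab by the number $m$ of coordinates in which $x$ takes the middle value $v_2^{(j)}$: such an $x$ has exactly $3^{m}2^{\,k-1-m}$ neighbours in the infected slab, so vertices are infected in rounds of decreasing $m$, each round using the infected slab together with the already-infected vertices of larger $m$ in its own slab. The delicate endpoint is the set of $2^{k}$ corners (extreme in every path coordinate), which have degree \emph{exactly} $2^k-1$ and hence need their entire neighbourhood; one checks that by the time a corner is processed it receives $2^{k-1}$ infected neighbours from the adjacent slab and the remaining $2^{k-1}-1$ from within its own slab, meeting the threshold with equality. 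This same lemma, applied to the two outer slabs $\prod_{i<k}V(P^{(i)})\times\{v_1^{(k)}\}$ and $\prod_{i<k}V(P^{(i)})\times\{v_3^{(k)}\}$, drives the infection of the whole core, and, applied with an edge $v_s^{(i)}v_t^{(i)}$ of a factor $G_i$ in place of the $K_2$, drives the spreading of the infection from the core to all of $G$.

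With the lemma in hand the argument runs as follows. From $S=M\setminus R$ the central and other high-$m$ vertices of both outer slabs ignite immediately (here $3^{k-1}-k\ge 2^k-1$ is used), and the cascade of the lemma proceeds; once enough of each outer slab is infected, every deleted vertex of $R$ regains more than $2^k-1$ infected neighbours (from the two outer slabs together with $M\setminus R$) and is re-infected, restoring the full middle slab. At that point the lemma yields both outer slabs in full, so all of $C$ is infected. Finally, spreading along one coordinate at a time exactly as in the proofs of Theorems~\ref{thm:veliko_faktorjev_vsaj_dva_3} and~\ref{thm:veliko_faktorjev_vsaj_trije_3} (replaying the lemma across each edge of a BFS tree of $G_i$, with the perpendicular coordinates remaining a full $P_3^{\boxtimes(k-1)}$) shows that the fully infected core percolates all of $G$, giving $m(G,2^k-1)\le |S|=3^{k-1}-k$.

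The main obstacle I expect is the bookkeeping in the third paragraph: one must choose the $k$ deleted vertices of $R$ and order the infection so that every vertex of $R$ is genuinely re-infected \emph{before} it is needed by a tight corner, while simultaneously checking that the removals never drop any igniting count below $2^k-1$. This is where the slack created by $k\ge 4$ (both in $3^{k-1}-k\ge 2^k-1$ and in the surplus of neighbours available to the high-$m$ vertices) must be spent carefully; the corners, being tight, leave no room and must be handled last.
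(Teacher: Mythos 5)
Your overall skeleton coincides with the paper's: reduce to $r=2^k-1$; seed with a $P_3^{\boxtimes(k-1)}$-slab minus $k$ vertices; prove a slab-to-slab spreading lemma by classifying vertices according to the number $m$ of middle coordinates, with the count $3^m2^{k-1-m}+2^{k-1-m}-1\ge 2^k-1$ (equality at $m\in\{0,1\}$), which is exactly the computation in the paper; and finish by sweeping the infection one coordinate at a time. The lemma and the final sweep are correct as you describe them.

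The genuine gap is the first stage: you never specify the $k$ deleted vertices $R$, and you never prove that the deficient slab $M\setminus R$ recovers to a full slab --- you explicitly defer this as ``bookkeeping''. This is not routine bookkeeping; it is precisely where the constant $3^{k-1}-k$ is earned, and your proposed route makes it harder than it needs to be. You want to run the cascade in the two outer slabs first and then re-infect $R$ from them, but, as you yourself note, a cascade driven by a deficient slab has zero slack at levels $m\le 1$ (the inequality above is an equality there), so the outer-slab cascade stalls at those levels whenever a deleted vertex is among the required neighbours, and you are left with a circular dependence that the proposal does not resolve. The paper avoids the detour entirely by fixing the shape of the deleted set: it consists of the central vertex $v$ of the slab together with the $k-1$ vertices obtained from $v$ by changing exactly one coordinate to $v_3^{(i)}$. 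With this choice the recovery happens inside the slab, before any spreading, in two steps: $v$ is adjacent to every vertex of $S$, so it ignites because $3^{k-1}-k\ge 2^k-1$ for $k\ge 4$; then each other deleted vertex is adjacent to $v$ and to $2\cdot 3^{k-2}-(k-1)$ vertices of $S$, and $2\cdot 3^{k-2}-(k-1)\ge 2^k-1$ (with equality at $k=4$). After that the slab is complete and your lemma takes over. Your argument becomes a correct proof once you take $R$ to be this specific set (whether the seed slab sits at $v_2^{(k)}$, as you propose, or at $v_1^{(k)}$, as in the paper, is immaterial) and replace the outer-slab detour by this two-step in-slab recovery.
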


\begin{proof}
Note that it suffices to show that $m(G,2^k-1)\leq 3^{k-1}-k$. Indeed, the truth of this inequality directly implies that $m(G,r)\leq 3^{k-1}-k$ for all $r\le 2^{k}-1$. 

Let $V(G_i) = \lbrace v_1^{(i)} , v_2^{(i)},\dots ,v_{n_i}^{(i)} \rbrace$ for all $i \in [k]$, and by the assumption $n_i\ge 3$ for all $i\in [k]$. 
Let $P^{(i)}:v_1^{(i)}v_2^{(i)}v_3^{(i)}$ be paths in $G_i$ for all $i \in [k]$. (Since for each $i \in [k]$ graph $G_i$ is connected of order at least $3$, such a path exists.) 
Let $v=(v_2^{(1)},\ldots,v_2^{(k-1)},v_1^{(k)})$,  
$$U = \{(x_1,\ldots,x_{k-1},v_1^{(k)}): \, \exists i\in [k-1] \text{ with } x_i = v_3^{(i)},  \text{ and } x_j = v_2^{(j)} \, \forall j \neq i \} \bigcup \{v\},$$ and let
\begin{equation}
\label{eq:2}
S = \left(\prod_{i=1}^{k-1}V(P^{(i)})\times \{v_1^{(k)}\} \right)  \setminus U.
\end{equation}
Clearly, $|S|=3^{k-1}-k$. Since $k \geq 4$, it follows that $3^{k-1}-k \geq 2^k -1$. We claim that $S$ percolates. First note that $v$ is adjacent to every vertex in $S$, so it gets infected. Let $u \in U\setminus \{v\}$ be an arbitrary vertex. Without loss of generality let $u = (v_3^{(1)},v_2^{(2)},\ldots,v_2^{(k-1)},v_1^{(k)})$. Then $u$ is adjacent to $v$ and also to every vertex $(y_1,\ldots,y_{k-1},,v_1^{(k)})$, where $y_1 \in \{v_2^{(1)},v_3^{(1)}\}$ and $y_i \in P^{(i)}$ for all $ i \in \{2,\ldots,k-1\}$, except for the vertices in $U \setminus \{v\}$. Therefore it has $2\cdot 3^{k-2} -(k-1)$ infected neighbors, and since $k \geq 4$, we have $2\cdot 3^{k-2} -(k-1) \geq 2^k-1$, as desired. With this we have proved that vertices in $U$ get infected. Therefore, all vertices in $\prod_{i=1}^{k-1}V(P^{(i)})\times \{v_1^{(k)}\}$ are now infected. 

Next, we will prove that vertices in $\prod_{i=1}^{k-1}V(P^{(i)})\times V(G_k)$ become infected. Since $G_k$ is connected it suffices to show that if vertices in $\prod_{i=1}^{k-1}V(P^{(i)})\times \{v_s^{(k)}\}$ are infected this implies that vertices $\prod_{i=1}^{k-1}V(P^{(i)})\times \{v_t^{(k)}\}$ become infected, where $v_s^{(k)}$ and $v_t^{(k)}$ are any adjacent vertices in $G_k$.
Assume now that $v_s^{(k)}$ and $v_t^{(k)}$ are adjacent in $G_k$ and that vertices of 
$\prod_{i=1}^{k-1}V(P^{(i)})\times \{v_s^{(k)}\}$ are infected. Note that all these vertices are adjacent to the vertex $(v_2^{(1)},\ldots ,v_2^{(k-1)},v_t^{(k)})$, which gets infected, since it has $3^{k-1}$ infected neighbors. Let $j \in \{0,1,\ldots ,k-2\}$ and suppose that all vertices in $\prod_{i=1}^{k-1}V(P^{(i)})\times \{v_t^{(k)}\}$ which contain at most $j$ coordinates not equal to $v_2^{(i)}$ for some $i \in [k]$ are already infected. Consider any vertex $x \in \prod_{i=1}^{k-1}V(P^{(i)})\times \{v_t^{(k)}\}$ which contains exactly $j+1$ coordinates not equal to $v_2^{(i)}$ for some $i \in [k]$. Without loss of generality let $x = (v_3^{(1)},\ldots,v_3^{(j+1)},v_2^{(j+2)},\ldots,v_2^{(k-1)},v_t^{(k)}).$ Then $x$ has $2^{j+1}\cdot3^{k-j-2}$ infected neighbors of the form of $y=(y_1,\dots,y_{k-1},v_s^{(k)})$ where $y_i \in \{v_2^{(i)},v_3^{(i)}\}$ for all $i \in [j+1]$ and $y_i \in \{v_1^{(i)},v_2^{(i)},v_3^{(i)}\}$ for all $i \in \{j+2,\ldots,k-1\}.$ Note that $x$ also has $2^{j+1}-1$ infected neighbors of the form of $y=(y_1,\ldots,y_{j+1},v_2^{(j+2)},\ldots ,v_2^{(k-1)},v_t^{(k)})$, where $y_i \in \{v_2^{(i)},v_3^{(i)}\}$ for all $i \in [j+1]$, except when $y_i= v_3^{(i)}$ for every $i \in [j+1]$. Thus $x$ has $2^{j+1}\cdot3^{k-j-2} + 2^{j+1}-1 \geq 2^k -1$ infected neighbors, therefore it gets infected. This proves that eventually every vertex in $\prod_{i=1}^{k-1}V(P^{(i)})\times V(G_k)$ gets infected.

By continuing this process (choosing paths on three vertices in all but one coordinate, and spreading the infection throughout the remaining coordinate), we finally derive that all vertices of $G$ eventually become infected, and thus $S$ percolates.   
\qed
\end{proof}

Note that we only proved an upper bound for $m(G,r)$, where $G$ has $k$ factors all of which with at least three vertices, and the exact values are still open. Nevertheless, Observation~\ref{obs1} yields a dichotomy to the above result by presenting strong products of $k$ graphs in which the $r$-percolation number, where $r\ge 2^k$, can be arbitrarily large.

%%%%%%%%%%%%%%%%%%%%%%%%%%%%%%%%%
%%%%%%%%%%%%%%%%%%%%%%%%%%%%
\section{Percolation numbers of $G\boxtimes H$}
\label{sec:two-factors}
%%%%%%%%%%%%%%%%%%%%%%%%%%%%%%
%%%%%%%%%%%%%%%%%%%%%%%%%%%%%%%%%%%

In this section, we consider $m(G\boxtimes H,r)$, where $G$ and $H$ are non-trivial connected graphs. When $r=3$ and both $G$ and $H$ have order at least $3$, we immediately get the following result from Theorem~\ref{thm:veliko_faktorjev_vsaj_dva_3}. 
 
\begin{cor}
\label{prp:r3k2}
If $G$ and $H$ are connected graphs each with at least $3$ vertices, then $m(G \boxtimes H,3)=3$. 
\end{cor}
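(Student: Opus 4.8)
The plan is to simply invoke the stronger theorem that has already been established. Corollary~\ref{prp:r3k2} is precisely the two-factor case ($k=2$) of Theorem~\ref{thm:veliko_faktorjev_vsaj_dva_3}, specialized to the threshold $r=3$. So the proof is essentially a one-line deduction: check that the hypotheses of the theorem are satisfied in this setting, then read off the conclusion.

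More concretely, I would first observe that with $k=2$ we have $3\cdot 2^{k-2} = 3\cdot 2^{0} = 3$, so the admissible range $2\le r\le 3\cdot 2^{k-2}$ in Theorem~\ref{thm:veliko_faktorjev_vsaj_dva_3} becomes exactly $2\le r\le 3$, which contains our value $r=3$. Next I would verify the structural hypothesis: the theorem requires that at least two of the factors have order at least $3$. Since there are only two factors here, namely $G$ and $H$, and both are assumed to have at least $3$ vertices, this condition holds (indeed both factors, not merely two of them, satisfy it). Finally, the order condition $|V(G\boxtimes H)|\ge r$ is automatic, since $|V(G\boxtimes H)| = |V(G)|\cdot |V(H)| \ge 3\cdot 3 = 9 \ge 3 = r$. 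With all hypotheses met, Theorem~\ref{thm:veliko_faktorjev_vsaj_dva_3} yields $m(G\boxtimes H,3)=3$.

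There is no real obstacle to anticipate here, since the genuine work was carried out in the proof of Theorem~\ref{thm:veliko_faktorjev_vsaj_dva_3} (the explicit percolating set and the layer-by-layer spreading argument). The only thing worth being careful about is the arithmetic of the bound: one must confirm that $3\cdot 2^{k-2}$ truly equals $3$ when $k=2$ (so that $r=3$ is admissible rather than just below or above the threshold), and that the ``at least two non-edge factors'' hypothesis is not vacuously failing when there are exactly two factors. Both checks are immediate. Thus the proof is a direct specialization, and I would present it in a single sentence noting that the statement is the case $k=2$, $r=3$ of Theorem~\ref{thm:veliko_faktorjev_vsaj_dva_3}.
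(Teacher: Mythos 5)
Your proposal is correct and matches the paper exactly: the paper also obtains this corollary as the immediate specialization of Theorem~\ref{thm:veliko_faktorjev_vsaj_dva_3} to $k=2$ and $r=3$, where $3\cdot 2^{k-2}=3$. Your additional verification of the hypotheses (both factors of order at least $3$, and $|V(G\boxtimes H)|\ge 9\ge r$) is exactly the routine check the paper leaves implicit.
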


In the next subsection, we consider the only remaining case for $m(G\boxtimes H,3)$, which is when one of the factors is $K_2$, and prove a characterization of the graphs $G$ such that $m(G \boxtimes K_2,3)=3$.  In Section~\ref{sec:r4r5} we follow with some bounds on $m(G \boxtimes H,4)$ and $m(G \boxtimes H,5)$.

\subsection{Strong prisms and $r=3$}
By Corollary~\ref{prp:r3k2}, the only remaining case for the $3$-neighbor bootstrap percolation of the strong product of two factors is when one of the factors is $K_2$, that is, $G\boxtimes K_2$, or the so-called strong prism of a graph $G$.  We will denote the vertices of a strong prism as follows: letting $V(K_2)=[2]$, we will write $V(G\boxtimes K_2) = \lbrace v_i : \, v_i=(v,i), \textrm{ where } v \in V(G), i \in [2] \rbrace$.

Note that if $x$ and $y$ are twins in a graph $G$, and among the two only $x$ belongs to a percolating set of $G$, then $S'=(S\setminus\{x\})\cup\{y\}$ is also a percolating set of $G$, and has the same cardinality as $S$. This observation will be helpful in the proof of the following auxiliary result. 

\begin{lema} \label{lem:prizme}
If $G$ is a graph of order at least $3$ and $m(G \boxtimes K_2, 3)=3$, then there exists a minimum percolating set of $G \boxtimes K_2$ all vertices of which are in the same $G$-layer. 
\end{lema}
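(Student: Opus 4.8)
The plan is to start from an arbitrary minimum percolating set $S$ of $G\boxtimes K_2$, so $|S|=3$, and to massage it into one lying inside a single $G$-layer. The basic tool is that for every $v\in V(G)$ the vertices $(v,1)$ and $(v,2)$ are (closed) twins in $G\boxtimes K_2$, since $N[(v,1)]=N[(v,2)]=\{(u,j): u\in N_G[v],\ j\in[2]\}$; hence the twin observation stated just before the lemma applies. I would then split according to how the three vertices of $S$ distribute among the ``columns'' $C_u=\{(u,1),(u,2)\}$. As each column has only two vertices, either (a) the three vertices of $S$ occupy three distinct columns, or (b) two of them fill a single column $C_v$ while the third lies in a different column.

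In case (a) I would push $S$ into the layer $G^1$ one vertex at a time. Whenever $(u,2)\in S$, its twin $(u,1)$ is not in $S$ (the columns are distinct, so $C_u$ meets $S$ only in $(u,2)$), and the twin observation replaces $(u,2)$ by $(u,1)$ while keeping a minimum percolating set. Since the set of occupied columns never changes, these swaps do not interfere, and after performing all of them I obtain a minimum percolating set contained in $G^1$.

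Case (b) is the real content. After possibly one twin-swap of the isolated vertex I may assume $S=\{(v,1),(v,2),(w,1)\}$ with $w\neq v$. The first sub-step shows that $v$ and $w$ must have a common neighbour $c$ in $G$: inspecting the first percolation step, the only vertices infected from $S$ at time $1$ are the copies $(c,1),(c,2)$ for common neighbours $c$ of $v,w$, together with $(w,2)$ when $vw\in E(G)$; if there were no common neighbour the process would freeze at $C_v\cup C_w$, and since $G$ has order at least $3$ some column outside $\{v,w\}$ would never be reached, contradicting that $S$ percolates. The second, decisive sub-step is a replacement argument: writing $\langle X\rangle=\bigcup_{t\ge 0}A_t$ for the set eventually infected from an initial set $X$, I claim the one-layer set $S^{\ast}=\{(v,1),(w,1),(c,1)\}$ percolates. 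Indeed $(c,2)$ has the three infected neighbours $(v,1),(w,1),(c,1)$ and is infected at time $1$; then $(v,2)$ has the three infected neighbours $(v,1),(c,1),(c,2)$ and is infected at time $2$. Consequently $S\subseteq \langle S^{\ast}\rangle$, and since $\langle S^{\ast}\rangle$ is closed under the percolation rule while $S$ percolates, monotonicity gives $\langle S^{\ast}\rangle\supseteq \langle S\rangle=V(G\boxtimes K_2)$. As $c\notin\{v,w\}$, the set $S^{\ast}\subseteq G^1$ has size $3=m(G\boxtimes K_2,3)$, settling case (b).

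The step I expect to be the crux is the replacement in case (b): the twin observation alone cannot move a full column $C_v$ into one layer, so the key insight is that a common neighbour $c$ lets $S^{\ast}$ re-create all of $S$ within two steps, after which monotonicity finishes the job. The accompanying ``no common neighbour $\Rightarrow$ frozen'' analysis is the other delicate point, since there one must \emph{rule out} percolation rather than \emph{establish} it.
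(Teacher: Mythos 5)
Your proof is correct and follows essentially the same route as the paper's: twin swaps dispose of the case of three distinct $K_2$-layers, and in the doubled-column case you build a one-layer set spanning a $P_3$ in $G$ that re-infects the original set $S$, so monotonicity finishes. The only variation is how that $P_3$ is found: the paper takes the first infected vertex (necessarily a common neighbour of all three vertices of $S$) and splits into two subcases, one of which needs only a vertex adjacent to one of the two occupied columns, whereas you prove the stronger structural fact that the two occupied columns must share a common neighbour in $G$ (via the correct ``no common neighbour $\Rightarrow$ frozen'' analysis), which lets you treat the case uniformly.
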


\begin{proof}
Let $S$ be a minimum percolating set of $G\boxtimes K_2$.
Note that for any $v\in V(G)$, vertices $v_1$ and $v_2$ are (closed) twins. Hence, if $S$ contains three vertices no two of which are in the same $K_2$-layer, then by the observation preceding the lemma, we infer that $S$ can be modified in such a way that all of its vertices belong to the same $G$-layer.

Now, assume that $S= \lbrace w_1,w_2,u_1 \rbrace$ and let $x \in V(G \boxtimes K_2)$ be a common neighbor of vertices $w_1,w_2,u_1$. Consider the following two cases. 
\begin{enumerate}
\item[(i)] If $x=u_2$, then $u_1$ and $w_1$ are neighbors. Since $|V(G)|\ge 3$, there exists a vertex $z_1$ adjacent to $u_1$ or $w_1$ (hence, $u, z$ and $w$ form a path in $G$ of which central vertex is $u$ or $w$). Note that $S'=\{u_1,z_1,w_1\}$ is a percolating set of $G\boxtimes K_2$, since after at most two steps $w_2$ gets infected as well.  
\item[(ii)] If $x \neq u_2$, then $x \in \lbrace z_1,z_2 \rbrace$ for a vertex $z \in V(G)\setminus \lbrace u,w \rbrace$. In either way, $z_1$ is adjacent to both $u_1$ and $w_1$, and so $uzw$ is a path in $G$. Again,  $S'=\{u_1,z_1,w_1\}$ is a percolating set of $G\boxtimes K_2$, since $z_2$ gets infected after the first step, and then $w_2$ is infected after the second step.
\end{enumerate}
In both cases, we found a minimum percolating set of $G \boxtimes K_2$ that lies in one $G$-layer, as desired. 
\qed
\end{proof}

\iffalse
\begin{lema} \label{lema:krepki produkt s kliko}

Let $m(G,r)=k$ for some $r,k \in \mathbb{N}$. Then $m(G\boxtimes K_{\ell},r \ell) \leq k \ell$.
\end{lema}
\begin{proof}
Let $S$ be a percolating set of $G$ of size $k$. Denote $V(K_{\ell} = \lbrace 1,2,\dots ,\ell \rbrace$. Then $S' = S \times [ \ell ]$ is a subset of $V(G \boxtimes K_{\ell})$ of size $k\ell$. Consider the percolation process in $G$, and let $u\in V(G)$ be adjacent to a set $I_u$ of infected vertices at a certain point in time, where $|I_u|\ge r$. Then, an analogous process works also in $G\boxtimes K_{\ell}$, since vertices $u_1$, $u_2, \dots ,u_{\ell}$ are adjacent to the vertices $I_u\times [\ell]$, where $|I_u\times [\ell]|\ge r \ell$. Hence, $u_1, \dots ,u_{\ell}$ become infected at the same time. Since $S$ percolates $G$, $S$ percolates $G\boxtimes K_{\ell}$ and we obtain the desired result.
\qed
\end{proof}
\fi

Next we present a complete characterization of strong prisms whose $3$-percolation number equals $3$. 

\begin{thm} \label{thm:karakterizacijaprizem}
If $G$ is a connected graph, then $m(G\boxtimes K_2,3)=3$ if and only if either $m(G,2) = 2$ or $m(G,2)=3$ with a percolating set $S$ such that vertices from $S$ lie in a subgraph of $G$ isomorphic to $P_3$ or $K_{1,3}$. 
\end{thm}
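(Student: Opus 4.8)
The plan is to prove the biconditional in both directions, working with the structural characterization of minimum percolating sets afforded by Lemma~\ref{lem:prizme}. Throughout, I would use the fact that $v_1$ and $v_2$ are twins for every $v\in V(G)$, so the $3$-neighbor percolation in $G\boxtimes K_2$ is tightly linked to the $2$-neighbor percolation (i.e.\ $P_3$-convexity) in $G$ itself: a vertex $u_1$ in layer $1$ that has two infected neighbors both lying in layer $1$ (say $w_1$ and $z_1$) will still need a \emph{third} infected neighbor to percolate in the strong prism, and that third neighbor can be supplied ``for free'' by the twin $w_2$ or $z_2$ in the other layer. This bookkeeping---translating ``$r=2$ in $G$'' into ``$r=3$ in $G\boxtimes K_2$''---is the conceptual heart of the argument.

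\textbf{Sufficiency.} First I would assume $m(G,2)=2$ with percolating set $\{a,b\}$ and show that $S=\{a_1,b_1,a_2\}$ (or a suitable size-$3$ set inside a single $G$-layer plus one twin) percolates $G\boxtimes K_2$. The idea is to mirror the $2$-neighbor percolation of $G$ in layer $1$: whenever an uninfected $u$ in $G$ acquires its second infected neighbor $w$ (with first infected neighbor $z$), the vertices $z_1,w_1$ together with a twin $z_2$ or $w_2$ give $u_1$ three infected neighbors, so $u_1$ percolates; one must check the base step, where the seed in layer $2$ provides the needed extra neighbor to get the process started. Once all of layer $1$ is infected, every $u_2$ sees its twin $u_1$ plus at least two further layer-$1$ neighbors of any neighbor of $u$ in $G$ (using connectivity and $|V(G)|\ge 3$), so layer $2$ fills in. For the case $m(G,2)=3$ with the three seeds lying in a copy of $P_3$ or $K_{1,3}$, I would verify directly that placing all three seeds in layer $1$ percolates: the special geometry of $P_3$ and $K_{1,3}$ guarantees that the central/common vertex structure lets the infection spread to layer $2$ and then propagate, exactly as in cases (i) and (ii) of Lemma~\ref{lem:prizme}.

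\textbf{Necessity.} Conversely, assume $m(G\boxtimes K_2,3)=3$. By Lemma~\ref{lem:prizme} there is a minimum percolating set $S$ contained in a single $G$-layer, say layer $1$, so $S=\{a_1,b_1,c_1\}$ with $a,b,c\in V(G)$. The plan is to analyze how the first few infection steps act on layer $1$. A vertex $u_1$ in layer $1$ gets infected only when it has three infected neighbors; since at the outset all infected vertices lie in layer $1$, and $u_2$ cannot be infected before some layer-$1$ propagation provides twins, I would argue that the ``layer-$1$ trace'' of the percolation essentially behaves like a $2$-neighbor process in $G$, after accounting for how twins from layer $2$ contribute third neighbors. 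This should force $\{a,b,c\}$ to $2$-percolate $G$, giving $m(G,2)\le 3$. The remaining work is to rule out the bad configurations: I would show that if $m(G,2)=3$ but \emph{no} minimum $2$-percolating set lies in a $P_3$ or $K_{1,3}$, then the three seeds are ``too spread out'' to ever generate the twin-assisted third neighbor needed to escape into layer $2$, contradicting percolation in the prism. Equivalently, the first vertex infected outside $S$ must have two of the three seeds among its neighbors in $G$, pinning the seeds onto a path or a claw.

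The main obstacle I anticipate is the necessity direction, specifically the precise combinatorial argument that the only ways three same-layer seeds can launch the infection into the second layer are via a $P_3$ or a $K_{1,3}$ configuration. The difficulty is that percolation is a global process, so I cannot simply look at one step; I would need a careful ``first escape'' argument showing that the \emph{first} time any layer-$2$ vertex becomes infected, it must have had a layer-$1$ vertex that already had two seed-neighbors in $G$, and then classify the local structure (common neighbor of two seeds, or a path through a seed) to extract either $P_3$ or $K_{1,3}$, while also confirming that in these cases the seeds genuinely $2$-percolate all of $G$ and not merely a fragment. Managing the twin interplay cleanly, without double-counting infected neighbors across the two layers, is where the bookkeeping will be most delicate.
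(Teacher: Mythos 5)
Your proposal follows essentially the same route as the paper's proof: sufficiency by placing a $2$-percolating set of $G$ into the prism and using twins to double infected-neighbor counts, and necessity by invoking Lemma~\ref{lem:prizme}, classifying the first infected vertex (a twin of a seed forces the seeds onto a $P_3$, a copy of a non-seed common neighbor forces a $K_{1,3}$), and projecting the prism process back to a $2$-neighbor process in $G$. The minor deviations (e.g.\ your seed $\{a_1,b_1,a_2\}$ versus the paper's all-in-one-layer seed $\{u_1,v_1,w_1\}$ with $w$ a common neighbor) do not change the argument.
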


\begin{proof}
Firstly, let $m(G,2)=2$ and let $\lbrace u,v \rbrace$ be a percolating set in the $2$-neighbor bootstrap percolation in $G$. If $G\cong K_2$, then $S=\lbrace u_1,v_1,u_2 \rbrace$ is a percolating set in the 3-neighbor bootstrap percolation in $G \boxtimes K_2$, so we may assume that $|V(G)|\ge 3$. Hence, there is a vertex $w$ in $G$ adjacent to both $u$ and $v$. Let $S = \lbrace u_1,v_1,w_1 \rbrace$, and consider the $3$-neighbor bootstrap percolation in $G\boxtimes K_2$. Note that $w_2$ gets infected directly from $S$, and in the second step also $u_2$ and $v_2$ get infected. From this point forward, the $3$-neighbor bootstrap percolation process in $G\boxtimes K_2$ follows analogous lines as the $2$-neighbor bootstrap percolation process in $G$. Notably, if $z\in V(G)$ gets infected by $x$ and $y$ in $G$, then $z_1\in V(G)\times [2]$ and $z_2\in V(G)\times [2]$ get infected by $\{x_1,x_2,y_1,y_2\}$ in $G\boxtimes K_2$. Thus $S$ percolates, and $m(G\boxtimes K_2,3)=3$. 

Secondly, let $m(G,2)=3$ and let $S=\lbrace u,v,w \rbrace$ be a percolating set of $G$ such that $uvw$ is a path in $G$. Let $S' = \lbrace u_1,v_1,w_1 \rbrace$, and consider the $3$-neighbor bootstrap percolation in $G\boxtimes K_2$. In the same way as in the previous paragraph we note that also $u_2,v_2$ and $w_2$ get infected. In addition, we  note that from this point forward, the $3$-neighbor bootstrap percolation process in $G\boxtimes K_2$ follows analogous lines as the $3$-neighbor bootstrap percolation process in $G$. Notably, if $w\in V(G)$ gets infected by $x, y$ and $z$ in $G$, then $w_1\in V(G)\times [2]$ and $w_2\in V(G)\times [2]$ get infected by $\{x_1,x_2,y_1,y_2,z_1,z_2\}$ in $G\boxtimes K_2$ (in fact, one could use only three vertices among the six to get the same result). 
Finally, let $S=\lbrace u,v,w \rbrace$ be a percolating set of $G$ such that $u,v,w$ are leaves in a subgraph of $G$ isomorphic to $K_{1,3}$ whose central vertex is denoted by $a$. Note that $S'=\lbrace u_1,v_1,z_1 \rbrace$ immediately infects $a_1$. Since vertices of the path $u_1a_1v_1$ in $G\boxtimes K_2$ are infected, we infer by the same reasoning as earlier that $S'$ percolates in $G \boxtimes K_2$.

For the reverse implication, let $m(G \boxtimes K_2,3)=3$. If $|V(G)|\le 3$ one can readily check that $m(G,2)=2$, so let $G$ be of order at least $4$. By Lemma \ref{lem:prizme}, one can choose a percolating set $S$ such that all its vertices lie in the same $G$-layer. Thus we may assume that $S=\lbrace u_1,v_1,w_1 \rbrace$ is a percolating set of $G\boxtimes K_2$, where $u,v,w \in V(G)$. Hence, there exists a common neighbor $x \in V(G\boxtimes K_2)$ of $u_1,v_1,w_1$. Consider the following cases:
\begin{enumerate}
\item[(i)] $x \in \lbrace u_2,v_2,w_2 \rbrace$. Then $u,v,w$ lie on a path $P_3$ in $G$. 
\item[(ii)] $x \notin \lbrace u_2,v_2,w_2 \rbrace$. Then $x \in \lbrace z_1,z_2 \rbrace$ for some vertex $z \in V(G)\setminus \lbrace u,v,w \rbrace$. Note that both $z_1$ and $z_2$ are adjacent to $u_1,v_1,w_1$. Hence, $u,v$ and $w$ are leaves of a subgraph of $G$ isomorphic to $K_{1,3}$ and $z$ is its central vertex.
\end{enumerate}
In both cases, either immediately or after the first step, one finds an infected path isomorphic to $P_3$ in the first layer, and as noted earlier, the corresponding vertices in the second layer also get infected. This property is maintained throughout the process; namely, whenever a vertex $u_1$ gets infected, its twin $u_2$ has the same set of (infected) neighbors and so it also get infected at the same time (and vice versa). 
Since $u_1$ had at least three infected neighbors, at least two of them were in $G^1$. We infer that the set $\lbrace u,v,w \rbrace$ percolates in the $2$-neighbor bootstrap percolation in $G$, therefore $m(G,2)\le 3$. In addition, if $m(G,2)=3$, then by the above, $\{u,v,w\}$ is a percolating set, where $u,v,w$ are in a subgraph isomorphic to $P_3$ or $K_{1,3}$. The proof is complete. \qed
\end{proof}

Unfortunately, we do not know of a structural characterization of the class of graphs with $m(G,2)=2$ or $m(G,2)=3$. One might wonder if the latter class of graphs always contains a $2$-neighbor bootstrap percolating set $S$ that appears in the formulation of the theorem, that is, vertices from $S$ lying in a subgraph of $G$ isomorphic to $P_3$ or $K_{1,3}$.  However, the following example shows this is not the case. Let $G'$ be the graph obtained from $C_4$ by adding two leaves to a vertex; see Fig.~\ref{fig:G'}. Note that $m(G',2)=3$ with the unique percolating set $S$ depicted in the figure (the leaves must be included in $S$ and one more vertex is needed), which does not satisfy the condition of Theorem \ref{thm:karakterizacijaprizem}, therefore $m(G' \boxtimes K_2,3)>3$. 

\begin{figure}[!ht]
\centering
\begin{tikzpicture}[scale=1, style=thick]
\def\vr{3pt}
\def\len{1}

\coordinate(a) at (0,0);
\coordinate(b) at (0,2);
\coordinate(c) at (1,1);
\coordinate(d) at (2,0);
\coordinate(e) at (2,2);
\coordinate(f) at (1,3);

\draw (a) -- (c);
\draw (b) -- (c);
\draw (c) -- (d);
\draw (c) -- (e);
\draw (b) -- (f);
\draw (e) -- (f);

\draw(a)[fill=black] circle(\vr);
\draw(b)[fill=white] circle(\vr);
\draw(c)[fill=white] circle(\vr);
\draw(d)[fill=black] circle(\vr);
\draw(e)[fill=white] circle(\vr);
\draw(f)[fill=black] circle(\vr);

\draw[anchor = east] (c) node {$x$};

\end{tikzpicture}
\caption{Graph $G'$ with a unique percolating set $S$}
\label{fig:G'}
\end{figure}
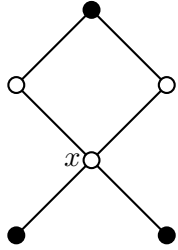

To gain a better understanding of the class of graphs characterized in Theorem~\ref{thm:karakterizacijaprizem}, we give some structural properties of the graphs $G$ with $m(G\boxtimes K_2,3)=3$ related to cut-vertices in $G$. 

\begin{prop} \label{prop: cut vertex}
If $m(G\boxtimes K_2,3)=3$, then $G$ has at most one cut-vertex $x$, and if $x$ is a cut-vertex of $G$ then $G-x$ has at most three components. 
\end{prop}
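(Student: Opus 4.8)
The plan is to reduce everything to the $2$-neighbor bootstrap percolation on $G$ via the characterization in Theorem~\ref{thm:karakterizacijaprizem}, and then exploit a simple locality principle for cut-vertices. From $m(G\boxtimes K_2,3)=3$ and Theorem~\ref{thm:karakterizacijaprizem} I obtain a percolating set $S$ for the $2$-neighbor bootstrap percolation on $G$ with $|S|\le 3$ which is \emph{clustered}, meaning any two vertices of $S$ are at distance at most $2$ in $G$. Indeed, if $m(G,2)=3$ then $S$ lies in a subgraph isomorphic to $P_3$ or $K_{1,3}$, so its vertices are pairwise within distance $2$; and if $m(G,2)=2$ (with $|V(G)|\ge 3$), then the two seeds must have a common neighbour, for otherwise no vertex ever acquires two infected neighbours and the process halts at $S$, contradicting that $S$ percolates. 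In either case $d_G$ between any two seeds is at most $2$.

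The key locality principle I would record first is the following: for the $2$-neighbor bootstrap percolation on $G$, if $z$ is a cut-vertex, then every component of $G-z$ contains a vertex of $S$. This is immediate, since the first vertex of a given component $C$ to become infected has all its neighbours in $C\cup\{z\}$ and no infected neighbour inside $C$, hence at most one infected neighbour ($z$); it therefore cannot be infected by the rule and must already lie in $S$. The bound on the number of components is then immediate: if $x$ is a cut-vertex, the principle assigns to distinct components of $G-x$ distinct vertices of $S$, so the number of components is at most $|S|\le 3$. This proves the second assertion.

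For the first assertion I would argue by contradiction, assuming $x$ and $y$ are two distinct cut-vertices. Let $F_x$ be the union of the components of $G-x$ that do not contain $y$, and define $F_y$ symmetrically. A short separation argument shows that $F_x$ and $F_y$ are disjoint and that neither contains $x$ or $y$. By the locality principle, $F_x$ contains a seed $s\in S$ and $F_y$ contains a seed $t\in S$, and since $F_x\cap F_y=\emptyset$ we have $s\ne t$. Finally, every $s$-$t$ path must pass through both $x$ and $y$, because $s$ and $t$ lie in different components of $G-x$ and likewise in different components of $G-y$; as $s,t,x,y$ are four distinct vertices, this forces $d_G(s,t)\ge 3$, contradicting the clustering property $d_G(s,t)\le 2$. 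Hence $G$ has at most one cut-vertex.

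The main obstacle is the graph-theoretic bookkeeping in the last step: verifying that $F_x$ and $F_y$ are genuinely disjoint and that any $s$-$t$ path is forced through both cut-vertices, which requires a careful (though elementary) analysis of how the components of $G-x$ and $G-y$ interlock. A secondary point that must not be overlooked is the $m(G,2)=2$ subcase of the clustering property, where the distance bound comes not from a prescribed subgraph but from the observation that percolation cannot even start unless the two seeds share a neighbour.
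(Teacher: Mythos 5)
Your proof is correct, and while it shares the paper's skeleton (reduce to $2$-neighbor percolation in $G$ via Theorem~\ref{thm:karakterizacijaprizem}, establish the single-cut-vertex locality principle, and count components of $G-x$ against $|S|\le 3$ for the second assertion), it handles the first assertion by a genuinely different route. The paper works directly with the components of $G-\{x,y\}$: it claims that a percolating set must meet every such component ``from the same reason as above,'' and then argues that three seeds spread over these components cannot lie in a $P_3$ or $K_{1,3}$. You instead convert the structural condition of Theorem~\ref{thm:karakterizacijaprizem} into a metric one (any two seeds are at distance at most $2$, which also absorbs the $m(G,2)=2$ case via the common-neighbor observation) and derive the contradiction $d_G(s,t)\ge 3$ from seeds trapped on the far sides of the two separators. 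Your route is arguably more robust at exactly the paper's most delicate point: the locality principle transfers verbatim only to single-vertex deletions, since a component of $G-\{x,y\}$ attached to \emph{both} $x$ and $y$ could in principle be infected through $x$ and $y$ without containing a seed, so the paper's ``same reason as above'' needs additional justification there; you only ever invoke locality for $G-x$ and $G-y$ separately, restricted to the components avoiding the other cut-vertex ($F_x$ and $F_y$), where the argument is airtight. The deferred ``short separation argument'' is indeed elementary and true: if $v\in F_x\cap F_y$, a shortest $v$--$x$ path must pass through $y$, and its initial segment is a $v$--$y$ path avoiding $x$, contradicting $v\in F_x$; and since $t\notin F_x\cup\{x,y\}$ forces $t$ into the component of $G-x$ containing $y$, every $s$--$t$ path crosses both $x$ and $y$. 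What the paper's approach buys is a shorter, more self-contained case analysis (two versus three components of $G-\{x,y\}$); what yours buys is a cleaner logical structure and avoidance of the two-vertex-deletion subtlety.
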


\begin{proof}
Let $G$ be a graph that either has two distinct cut-vertices, or $G$ has one cut-vertex $x$, such that $G-x$ has more than three components (both cases infer $|V(G)| \geq 4$). We will show that $m(G\boxtimes K_2,3)>3$. Suppose, to the contrary, that $m(G\boxtimes K_2,3)=3$. Then according to Theorem~\ref{thm:karakterizacijaprizem} this implies that either $m(G,2)=2$ or $m(G,2)=3$ with a percolating set $S$ such that vertices in $S$ lie on a subgraph of $G$ isomorphic to $P_3$ or $K_{1,3}$.  

Firstly let $x$ be an arbitrary cut-vertex of $G$ and let $S$ be a percolating set of $G$ of either size $2$ or $3$, under the $2$-neighbor bootstrap percolation. Let $K$ and $L$ be any two connected components of $G-x$. Since there are no edges between vertices in $K$ and vertices in $L$, we infer that $S$ contains at least one vertex from either $K$ and $L$.  Thus if $G-x$ has more than three connected components, $S$ would contain at least $4$ vertices, a contradiction.

Now, suppose that $G$ contains an additional cut-vertex $y$. If $G-\lbrace x,y \rbrace$ has three connected components, then from the same reason as above $S$ contains a vertex from each of these components. However, such a set $S$ does not satisfy the condition that its vertices lie on a subgraph isomorphic to $P_3$ or $K_{1,3}$.

Finally, if $G-\lbrace x,y \rbrace$ has only two connected components, then $x$ and $y$ are adjacent. Once again $S$ contains a vertex from each connected component of $G-\{x,y\}$. Now, if $|S|=2$, then there are no common neighbors between infected vertices and $S$ cannot percolate. If $|S|=3$, then vertices in $S$ cannot lie on a graph isomorphic to $P_3$ or $K_{1,3}$. \qed
\end{proof}

\medskip

Let $G = K_{1,3}$ and consider $m(G \boxtimes K_2,3)$. Note that $G$ has a cut-vertex $x$, which separates $G-x$ into three components. Also note that a set $S$ containing the leaves of $K_{1,3}$ infects $G$, therefore according to Theorem~\ref{thm:karakterizacijaprizem}, $m(G \boxtimes K_2,3) = 3$. Hence, having a cut-vertex that yields exactly three components is possible for $m(G\boxtimes K_2,3)=3$.  

To see that the inverse of Proposition~\ref{prop: cut vertex} does not hold, take the graph $G'$ from Fig.~\ref{fig:G'}. Note that $G'$ has just one cut-vertex $x$ and $G'-x$ has three components. However $m(G'\boxtimes K_2,3)> 3$ because $m(G',2)=3$ with a unique percolating set $S$, which does not satisfy the condition from Theorem~\ref{thm:karakterizacijaprizem}. Another example is the cycle $C_5$. It has no cut-vertices and it is easy to see that no vertex set of size two percolates in the $2$-neighbor bootstrap percolation process. This means that $m(C_5,2)=3$ since a set containing one vertex and both of its diametral vertices is a percolating set of size 3. This is in fact the only way to form a minimum percolating set, since the only other possible subset of three vertices is a path $P_3$, which does not infect any new vertices.  Since the condition of Theorem~\ref{thm:karakterizacijaprizem} is not satisfied, $m(C_5\boxtimes K_2,2)>3$. 

By Theorem~\ref{thm:k_faktorjev_rvelik}, we infer the following result, showing that $m(G \boxtimes K_2,3)$ can be arbitrarily large.

\begin{cor} \label{prop: Cn prizme}
Let $n \in \mathbb{N}$. Then $m(C_n \boxtimes K_2,3) = \lceil \frac{n}{2} \rceil +1$. 
\end{cor}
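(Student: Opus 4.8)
The plan is to apply Theorem~\ref{thm:k_faktorjev_rvelik} directly, since $C_n \boxtimes K_2$ is exactly the graph treated there in the special case $k=2$. Indeed, with $k=2$ the product $C_n \boxtimes K_2 \boxtimes \dots \boxtimes K_2$ (with $K_2$ appearing $k-1=1$ times) is simply $C_n \boxtimes K_2$, and the threshold $2^{k-1}+1$ equals $2^{1}+1 = 3$. Substituting $k=2$ into the formula $2^{k-1}-1 + \lceil \frac{n}{2} \rceil$ yields $2^{1}-1 + \lceil \frac{n}{2} \rceil = 1 + \lceil \frac{n}{2} \rceil$, which matches the claimed value $\lceil \frac{n}{2}\rceil + 1$.

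Concretely, I would first recall that Theorem~\ref{thm:k_faktorjev_rvelik} states $m(C_n \boxtimes K_2 \boxtimes \dots \boxtimes K_2, 2^{k-1}+1) = 2^{k-1}-1+\lceil \frac{n}{2}\rceil$ for $n \geq 3$, where $K_2$ appears $k-1$ times. Then I would simply instantiate $k=2$: the single $K_2$ factor gives the graph $C_n \boxtimes K_2$, the threshold becomes $r = 2^{k-1}+1 = 3$, and the right-hand side collapses to $\lceil \frac{n}{2}\rceil + 1$, establishing the corollary for $n \geq 3$.

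The one point requiring a brief separate remark is the small cases $n \in \{1,2\}$, since the cycle $C_n$ is only genuinely defined for $n \geq 3$ and Theorem~\ref{thm:k_faktorjev_rvelik} carries the hypothesis $n \geq 3$. For the corollary to hold as stated for all $n \in \mathbb{N}$, I would either tacitly restrict to $n \geq 3$ (the natural reading, since $C_n$ presupposes $n\geq 3$) or verify the degenerate cases by hand. I do not anticipate any genuine obstacle here: the entire content is a one-line specialization of the already-proved theorem, and the main (and only) thing to get right is the arithmetic of the substitution $k=2$, confirming that both the threshold and the cardinality formula reduce correctly.
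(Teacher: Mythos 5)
Your proposal is correct and is exactly the paper's own argument: the corollary is stated immediately after the remark ``By Theorem~\ref{thm:k_faktorjev_rvelik}, we infer the following result,'' i.e.\ it is the specialization $k=2$ of that theorem, with threshold $2^{k-1}+1=3$ and value $2^{k-1}-1+\lceil \frac{n}{2}\rceil = \lceil \frac{n}{2}\rceil+1$, just as you computed. Your side remark about $n\in\{1,2\}$ is a fair observation on the paper's slightly loose ``$n\in\mathbb{N}$'' phrasing, but it does not change the substance.
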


%%%%%%%%%%%%%%%%%%%%%%%%%%%%%%%%%
%%%%%%%%%%%%%%%%%%%%%%%%%%%%
\subsection{Two factors and $r=4$ or $r=5$}
\label{sec:r4r5}
%%%%%%%%%%%%%%%%%%%%%%%%%%%%%%
%%%%%%%%%%%%%%%%%%%%%%%%%%%%%%%%%%%

Next, we consider the $4$- and the $5$-neighbor bootstrap percolation in strong products of two factors. The following result is of similar flavor as Theorem~\ref{thm:karakterizacijaprizem} in the sense that we use the $2$-percolation numbers of factor graphs.

\begin{thm} \label{thm: implikacija za m(G krat H, 4)}
Let $G$ and $H$ be connected graphs such that $m(G,2)=2$ and $m(H,2)=2$. 
Then,  $m(G \boxtimes H,4) \leq 5$. In addition, if there exists a  percolating set of $G$ (or $H$) consisting of two adjacent vertices, then $m(G \boxtimes H,4) = 4$.

\end{thm}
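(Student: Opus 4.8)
The plan is to reduce the whole argument to the way a fully infected $H$-column ${}^{g}H=\{g\}\times V(H)$ forces its neighbours to become infected under the $4$-neighbor rule, and then to drive this using the $2$-percolating sets of the two factors. Throughout I would fix a $2$-percolating set $\{u,u'\}$ of $G$ and a $2$-percolating set $\{w,w'\}$ of $H$, which exist by hypothesis. Since $m(G,2)=2$ forces $|V(G)|\ge 3$ and the first vertex infected by the $2$-process of $G$ must be a common neighbour of $u$ and $u'$, there is a vertex $c\in V(G)\setminus\{u,u'\}$ adjacent to both $u$ and $u'$; symmetrically, $H$ has a vertex $d\in V(H)\setminus\{w,w'\}$ adjacent to both $w$ and $w'$.

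Before splitting into cases I would isolate two elementary claims about the $4$-neighbor process on $G\boxtimes H$. \emph{Claim A (adjacent columns fill).} If $a\sim b$ in $G$ and every vertex of $\{a,b\}\times B$ is infected, where $B$ is a $2$-percolating set of $H$, then ${}^{a}H\cup{}^{b}H$ becomes fully infected: running the $2$-process of $H$ from $B$, each newly infected $h^\ast$ has two infected $H$-neighbours $\alpha,\beta$, and then $(a,h^\ast)$ is adjacent to the four infected vertices $(a,\alpha),(b,\alpha),(a,\beta),(b,\beta)$ (this is where $a\sim b$ is used), so it gets infected, and symmetrically $(b,h^\ast)$ gets infected. \emph{Claim B (column doubling).} If ${}^{a}H$ and ${}^{b}H$ are fully infected and some $g\in V(G)$ is adjacent to both $a$ and $b$, then ${}^{g}H$ becomes fully infected, since each $(g,h)$ has the (at least two) infected neighbours $\{a\}\times N_H[h]$ together with the (at least two) infected neighbours $\{b\}\times N_H[h]$, totalling at least four. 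Combining these, once two full columns ${}^{u}H$ and ${}^{u'}H$ are available, applying Claim B repeatedly along the $2$-process of $G$ starting from $\{u,u'\}$ infects every column, i.e.\ all of $G\boxtimes H$.

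For the ``$=4$'' statement I may assume, after exchanging the factors, that the percolating pair of $G$ consists of adjacent vertices, so $u\sim u'$. I would take $S=\{u,u'\}\times\{w,w'\}$, of size $4$. Claim A applied to the pair $(u,u')$ (using $u\sim u'$ and that $\{w,w'\}$ is $2$-percolating in $H$) fills the columns ${}^{u}H$ and ${}^{u'}H$, and the remark above then infects everything. As $m(G\boxtimes H,4)\ge 4$ always holds, this gives $m(G\boxtimes H,4)=4$.

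For the ``$\le 5$'' statement the pairs $\{u,u'\}$ and $\{w,w'\}$ need not be adjacent, which is exactly the obstacle: the columns ${}^{u}H$ and ${}^{u'}H$ no longer interact, so Claim A cannot be used on them directly. I would instead route through the common neighbour $c$ and take the $5$-element set $S=\big(\{u,u'\}\times\{w,w'\}\big)\cup\{(c,w)\}$. The only part that needs an explicit (and short) hand computation is to infect the single vertex $(c,w')$: first $(c,d)$ is infected by the four ``corner'' seeds; then $(u,d)$ is infected by $(u,w),(u,w'),(c,w),(c,d)$---this is precisely the step that consumes the extra seed $(c,w)$, and is why one needs $5$ rather than $4$ vertices---and finally $(c,w')$ is infected by $(u,w'),(u',w'),(c,d),(u,d)$. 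After this, each of the three columns ${}^{u}H,{}^{c}H,{}^{u'}H$ is infected on the $2$-percolating set $\{w,w'\}$ of $H$, so Claim A applied to the adjacent pairs $(u,c)$ and then $(c,u')$ fills all three columns; in particular ${}^{u}H$ and ${}^{u'}H$ are full, and Claim B along the $2$-process of $G$ from $\{u,u'\}$ finishes. The main obstacle, and the reason the bound rises from $4$ to $5$, is this loss of direct interaction between the two non-adjacent percolating columns, which forces the middle column ${}^{c}H$ to be seeded just well enough (one extra vertex) to be recoverable.
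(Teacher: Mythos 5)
Your proof contains one false assertion that leaves a case uncovered: $m(G,2)=2$ does \emph{not} force $|V(G)|\ge 3$, since $m(K_2,2)=2$ (the whole vertex set of $K_2$ is a percolating pair). When $G\cong K_2$ (or $H\cong K_2$) the common neighbour $c$ (resp.\ $d$) that you rely on does not exist, so your five-vertex construction for the bound $m(G\boxtimes H,4)\le 5$ is undefined in that case. The repair is immediate, and it is exactly what the paper does as a separate opening case: if a factor is $K_2$, its percolating pair is adjacent, so your ``$=4$'' argument --- which never uses $c$ or $d$ --- already gives $m(G\boxtimes H,4)=4\le 5$. You must, however, say this; as written, your ``$\le 5$'' claim is only proved when both factors have at least three vertices.

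Apart from this, your argument is correct and is in essence the paper's proof: your seeds coincide with its seeds (your $c$ is its $g_3$, the first vertex infected in the $2$-process of $G$, and your set $(\{u,u'\}\times\{w,w'\})\cup\{(c,w)\}$ is its $(\{g_1,g_2\}\times\{h_1,h_2\})\cup\{(g_3,h_1)\}$), and both proofs propagate the infection in the product along the $2$-percolation processes of the factors. The difference is organizational: the paper first infects an explicit $3\times 3$ grid and then runs a rectangle-filling double induction in which new vertices typically gather six infected neighbours, while you fill entire $H$-layers via the two reusable Claims A and B, both of which check out (in Claim B, note the two neighbour sets $\{a\}\times N_H[h]$ and $\{b\}\times N_H[h]$ are disjoint, so the count of four is valid), as does your hand computation $(c,d)\to(u,d)\to(c,w')$. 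Your modular route even has a concrete advantage: in the paper's Case~2 (adjacent percolating pair in $G$, seed of size $4$) it is asserted that $(g_3,h_1)$ is adjacent to all four seeds, which in fact requires $h_1h_2\in E(H)$, an assumption not available in that case; your Claim A instead first infects the vertices $(g_1,h^{*})$ and $(g_2,h^{*})$, where $h^{*}$ is a common neighbour of $h_1,h_2$, and thus handles that case cleanly.
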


\begin{proof}
Denote $V(G) = \lbrace g_1,\dots ,g_n \rbrace$ and $V(H) = \lbrace h_1, \dots , h_m \rbrace$ in such a way $\lbrace g_1,g_2 \rbrace$ and $\lbrace h_1,h_2 \rbrace$ are the percolating sets of $G$ and $H$, respectively. We also assume, renaming the vertices of $G$ and $H$ if necessary, that every vertex $g_i$, respectively $h_j$, is infected in $G$, respectively $H$, using the $2$-neighbor bootstrap percolation rule by some pair $g_{i_1},g_{i_2}$, respectively $h_{j_1},h_{j_2}$, where ${i_1}<{i_2} < i$ and ${j_1}<{j_2} < j$. 

First let $V(G) = \lbrace g_1,g_2 \rbrace$, in which case $g_1g_2 \in E(G)$. If $V(H) = \lbrace h_1,h_2 \rbrace$, then $m(G\boxtimes H,4)=4$. Hence let $|V(H)| \geq 3$ and let $S=\lbrace (g_1,h_1),(g_1,h_2),(g_2,h_1),(g_2,h_2) \rbrace$. Let $h_i$ be a vertex adjacent to $h_{i_1},h_{i_2}$ in $H$ and assume that all vertices $(g_1,h_j),(g_2,h_j)$ for all $j < i$ are already infected. Then $(g_1,h_i),(g_2,h_i)$ are both adjacent to vertices $(g_1,h_{i_1}),(g_2,h_{i_1}),(g_1,h_{i_2}),(g_2,h_{i_2})$. By induction, the whole $G \boxtimes H$ gets infected and $m(G\boxtimes H,4)=4$. 

Now let us assume that $|V(G)| \geq 3$ and $|V(H)| \geq 3$. Denote by $S_3$  the subgraph induced by $\{(g_i,h_j):\, i,j \in [3]\}$. Consider two cases for a percolating set $S$ in the $4$-neighbor bootstrap percolation.

\textbf{Case 1:} $g_1g_2 \notin E(G)$ and $h_1h_2 \notin E(H)$. Let $S=\lbrace (g_1,h_1),(g_1,h_2),(g_2,h_1),(g_2,h_2),$ $(g_3,h_1) \rbrace$ be a set of size $5$ and consider the following $4$-neighbor bootstrap  percolation process. Immediately $(g_3,h_3)$ is infected by (all) vertices in $S$. After that, the vertex $(g_2,h_3)$ is infected by $(g_3,h_3),(g_2,h_1),(g_2,h_2),(g_3,h_1)$.  Finally, vertices $(g_3,h_2)$ and $(g_1,h_3)$ are adjacent to $(g_1,h_2),(g_1,h_3),(g_2,h_2),(g_2,h_3),(g_3,h_3)$ and $(g_1,h_1),(g_1,h_2)$, $(g_3,h_1),(g_3,h_2),(g_3,h_3)$ respectively. This infects the whole $S_3$.

\textbf{Case 2:} $g_1g_2 \in E(G)$. Let $S=\lbrace (g_1,h_1),(g_1,h_2),(g_2,h_1),(g_2,h_2) \rbrace$ and consider the following $4$-neighbor bootstrap percolation process. Vertex $(g_3,h_1)$ is now adjacent to every vertex in $S$. Since we have now obtained the same set of infected vertices as in Case 1, this set infects $S_3$.

Now, let us assume that $S_3$ is already infected and consider the $4$-neighbor bootstrap percolation process. Let $g_i$ be a vertex infected in the $2$-neighbor bootstrap percolation process in $G$ by vertices $g_{i_1},g_{i_2}$ (where ${i_1},{i_2} < i$) and suppose that vertices $(g_k,h_1),\dots ,(g_k,h_p)$, for all $k < i$ and some $p \leq m$ are already infected in $G\boxtimes H$. Then $(g_i,h_p)$ gets infected by vertices $(g_{i_1},h_p),(g_{i_2},h_p),(g_{i_1},h_{p_1}),(g_{i_2},h_{p_1})$, $(g_{i_1},h_{p_2}),(g_{i_2},h_{p_2})$, where $h_p$ gets infected by $h_{p_1}$ and $h_{p_2}$ in the $2$-neighbor bootstrap percolation process in $H$. By the same argument we can show that every vertex $(g_i,h_j)$ where $3\leq j \leq p-1$ has $6$ infected neighbors and thus becomes infected. Finally $(g_i,h_2)$ and $(g_i,h_1)$ are adjacent to $(g_{i_1},h_2),(g_{i_2},h_2), (g_i,h_3), (g_{i_1},h_3),(g_{i_2},h_3)$ and $(g_{i_1},h_1),(g_{i_2},h_1), (g_i,h_3)$, $(g_{i_1},h_3), (g_{i_2},h_3)$ respectively. Thus all vertices in $\{g_1,\ldots,g_i\}\times\{h_1,\ldots,h_p\}$ are now infected. 

By reversing the roles of $G$ and $H$, we can deduce that all vertices in $\{g_1,\ldots,g_i\}\times\{h_1,\ldots,h_{p+1}\}$  become infected. By using induction, we infer that all vertices in $G\boxtimes H$ become infected.  Therefore $S$ percolates and $m(G\boxtimes H,4) \leq 5$, concluding the proof of the theorem.  
\qed
\end{proof}

Notice that throughout the infection process in the above proof, the only vertex which was infected by less than 5 vertices, was vertex $(g_2,h_3)$. Therefore it is not difficult to see that one can modify the proof of Theorem~\ref{thm: implikacija za m(G krat H, 4)} by adding the vertex $(g_2,h_3)$ to the set of initially infected vertices $S$, and obtain the following corollary. 

\begin{cor}\label{cor:implikacija za m(G krat H,5)}
Let $G$ and $H$ be connected graphs such that $m(G,2)=2$ and $m(H,2)=2$.
If $|V(G)| \geq 3, |V(H)| \geq 3$, then $m(G \boxtimes H,5) \leq 6$. In addition, if there exists a percolating set of $G$ (or $H$) consisting of two adjacent vertices, then $m(G \boxtimes H,5) = 5$.
\end{cor}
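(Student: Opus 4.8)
The plan is to reuse, essentially verbatim, the percolation schedule constructed in the proof of Theorem~\ref{thm: implikacija za m(G krat H, 4)}, promoting it from the $4$-neighbor to the $5$-neighbor rule by enlarging the initial set with one additional vertex. The guiding observation, recorded just before the statement, is that in that schedule every vertex lying outside the $3\times 3$ core $S_3=\{(g_i,h_j):i,j\in[3]\}$ is infected at a moment when it already has at least $5$ infected neighbors. I would first make this precise by re-examining the column-by-column (and, by symmetry, row-by-row) part of that proof and recording the exact number of infected neighbors available to each newly infected vertex: these counts are $5$ or $6$, so outside $S_3$ the $5$-neighbor rule is met with no change to the argument.

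It then remains to treat the core. Inside $S_3$ only a few vertices are infected with fewer than $5$ neighbors under the $4$-rule, and I would add to the initial set $S$ one suitably chosen core vertex that repairs all of these at once; since $|S|\le 5$ in every case of that proof, the enlarged set $S'$ has $|S'|\le 6$. I would then re-run the percolation restricted to $S_3$ from $S'$ and check that every remaining core vertex now acquires $5$ infected neighbors before it is used. Once $S_3$ is infected under the $5$-rule, the rest of the process is word-for-word that of Theorem~\ref{thm: implikacija za m(G krat H, 4)} and meets the threshold by the previous paragraph; this gives $m(G\boxtimes H,5)\le 6$.

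For the additional claim, the lower bound $m(G\boxtimes H,5)\ge 5$ is immediate from $m(\cdot,r)\ge r$. For the matching upper bound, when $G$ (or $H$) has a percolating set of two adjacent vertices we are in the case where the $4$-rule set already has size $4$; adjoining the single core vertex then yields a percolating set of size $5$ for the $5$-neighbor rule, so that $m(G\boxtimes H,5)=5$.

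The step I expect to be the main obstacle is the verification on $S_3$. Under the $4$-rule two of the ``far'' core vertices are infected in the same simultaneous step, each counting the other among its neighbors, which is illegitimate once the threshold is raised; the added seed must break this mutual dependence. Thus the crux is to choose that seed carefully and to confirm, in each of the few sub-cases distinguished by which seed pairs are adjacent, that this single addition indeed restarts and completes the percolation of $S_3$ under the $5$-neighbor rule.
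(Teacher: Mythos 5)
Your strategy is the same as the paper's (augment the seed of Theorem~\ref{thm: implikacija za m(G krat H, 4)} by one vertex and re-run its schedule, noting that outside the core $S_3$ every infection already sees at least $5$ infected neighbors), and the obstacle you flag at the end is exactly the right one. But the step you leave undone --- choosing the extra seed and re-verifying the core --- is the entire content of the corollary, and it cannot be waved through, because the natural choice of added vertex fails. Concretely, in Case 1 of that proof ($g_1g_2\notin E(G)$ and $h_1h_2\notin E(H)$), the vertices $(g_3,h_2)$ and $(g_1,h_3)$ are infected simultaneously under the $4$-rule, each having only four \emph{previously} infected neighbors; the fifth neighbor listed for each is the other one. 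Consequently, if one adds $(g_2,h_3)$ --- the unique vertex infected by fewer than $5$ neighbors under this (illegitimate) mutual counting, and in fact the vertex the paper itself proposes to add --- the $5$-rule process infects $(g_3,h_3)$ and then stalls: $(g_1,h_3)$ and $(g_3,h_2)$ each sit at exactly four infected neighbors, each waiting for the other. This deadlock is fatal, not cosmetic: for $G=H=P_3$ (with $g_1,g_2$ the endpoints and $g_3$ the center, so that the core is the whole graph) the resulting $6$-vertex set genuinely does not percolate. The single core vertex that does repair everything is $(g_3,h_2)$, i.e.\ one must take the seed $\{g_1,g_2,g_3\}\times\{h_1,h_2\}$: then $(g_3,h_3)$ sees all six seeds, after which $(g_2,h_3)$ and $(g_1,h_3)$ each have five infected neighbors, and the core is complete; adding $(g_1,h_3)$ or $(g_2,h_3)$ instead deadlocks. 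So the assertion ``one suitably chosen core vertex repairs all of these at once'' is true but only for this specific choice, and establishing that is the proof.

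For the ``in addition'' claim your outline is fine modulo the same verification: when $g_1g_2\in E(G)$ (Case 2), the $4$-element seed together with $(g_2,h_3)$ --- or again $(g_3,h_2)$ --- infects the core under the $5$-rule (first $(g_3,h_3)$ with five infected neighbors, then $(g_1,h_3)$, then $(g_3,h_1)$ and $(g_3,h_2)$), giving $m(G\boxtimes H,5)=5$; the deadlock phenomenon only bites when both percolating pairs are non-adjacent. In summary: right approach, and your closing caveat identifies precisely the subtlety that the paper's own one-line justification overlooks (its stated choice of added vertex fails in Case 1); but as written your argument is incomplete exactly at that point, and completing it requires the specific seed $(g_3,h_2)$ together with the short case check above.
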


Consider $m(P_3 \boxtimes P_3,4)$, and denote $V(P_3) = \lbrace 1,2,3 \rbrace$. Suppose that $S$ is a percolating set of $P_3 \boxtimes P_3$ with $|S|=4$. Since vertices $(1,1),(1,3),(3,1)$ and $(3,3)$ are of degree $3$, they must all be in $S$. They are all adjacent to vertex $(2,2)$, which gets infected. After that every remaining vertex, namely $(1,2),(2,1),(2,3),(3,2)$, contains exactly $3$ infected neighbors, therefore $S$ does not percolate. Hence, $m(P_3 \boxtimes P_3,4)=5$.  
Now, consider $m(P_3 \boxtimes P_3,5)$ and suppose that $S$ is a percolating set with $|S|=5$. Then $(1,1),(1,3),(3,1)$ and $(3,3)$ are all in $S$. It is not difficult to check that by adding to $S$ any of the remaining vertices, $S$ does not percolate. These examples show that the upper bounds in Theorem~\ref{thm: implikacija za m(G krat H, 4)} and Corollary~\ref{cor:implikacija za m(G krat H,5)} are sharp. 

Let $H=H_1\boxtimes H_2$ for arbitrary connected graphs $H_1,H_2$. Since Theorem \ref{thm:produkt_k_faktorjev} states that $m(H,2)=2$, where a percolating set consists of two adjacent vertices, the following corollary of Theorem \ref{thm: implikacija za m(G krat H, 4)} is immediate.

\begin{cor}
If $m(G,2) = 2$ and $|V(G)| \geq 3$, then $m(G \boxtimes H_1 \boxtimes H_2,5) = 5$.
\end{cor}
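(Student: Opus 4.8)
The plan is to derive this as a direct corollary of Theorem~\ref{thm: implikacija za m(G krat H, 4)} by recognizing that $H_1 \boxtimes H_2$ can play the role of the second factor $H$ in that theorem. First I would set $H = H_1 \boxtimes H_2$ and verify that $H$ satisfies the hypotheses needed to invoke Theorem~\ref{thm: implikacija za m(G krat H, 4)}: namely, that $H$ is connected and that $m(H,2)=2$. Connectedness of $H$ follows from the fact that the strong product of connected graphs is connected (indeed, this is implicit throughout the paper, since all strong products of connected factors are treated as connected). The key point is that Theorem~\ref{thm:produkt_k_faktorjev}, applied with $k=2$ and $r=2$, gives $m(H_1 \boxtimes H_2, 2) = 2$, and crucially, its proof produces a percolating set consisting of two \emph{adjacent} vertices (the set $S$ in that proof is chosen inside $\{(v_i^1,v_i^2): i \in \{1,2\}\}$, so the two vertices $(v_1^1,v_1^2)$ and $(v_2^1,v_2^2)$ are adjacent in the strong product).

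With these facts in hand, the main step is to apply the ``In addition'' clause of Theorem~\ref{thm: implikacija za m(G krat H, 4)}. That clause states that if $m(G,2)=2$, $m(H,2)=2$, and there exists a percolating set of $G$ \emph{or} $H$ consisting of two adjacent vertices, then $m(G \boxtimes H, 4) = 4$. Here the second factor $H = H_1 \boxtimes H_2$ supplies precisely such an adjacent percolating pair, so the hypothesis is met through $H$ rather than through $G$. Thus I would conclude $m(G \boxtimes (H_1 \boxtimes H_2), 4) = 4$. Since the strong product is associative, $G \boxtimes (H_1 \boxtimes H_2) = G \boxtimes H_1 \boxtimes H_2$, and the desired conclusion follows.

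One subtlety worth flagging is the discrepancy between the threshold $r=4$ in Theorem~\ref{thm: implikacija za m(G krat H, 4)} and the threshold $r=5$ stated in the corollary. The resolution is that the corollary actually follows by applying the result to the three-factor product viewed as the \emph{two}-factor product $G \boxtimes (H_1 \boxtimes H_2)$, but with the base threshold tracked correctly: one must check whether the intended conclusion is for the $4$- or the $5$-neighbor rule. Rereading, I believe the statement should be read in light of the fact that $G$ has $|V(G)| \geq 3$ and $H_1 \boxtimes H_2$ is itself a strong product, so the relevant invocation is of Corollary~\ref{cor:implikacija za m(G krat H,5)} (the $r=5$ analogue) rather than Theorem~\ref{thm: implikacija za m(G krat H, 4)} directly; the ``In addition'' clause there gives $m(G \boxtimes H, 5) = 5$ exactly when some factor has an adjacent percolating pair, which $H_1 \boxtimes H_2$ provides.

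The hard part here is not any computation but rather correctly matching the hypothesis ``there exists a percolating set consisting of two adjacent vertices'' to the factor that supplies it. The entire weight of the argument rests on the observation that Theorem~\ref{thm:produkt_k_faktorjev}'s percolating set for a strong product of two factors is automatically an adjacent pair; once that is pinned down, the corollary is essentially immediate from associativity and the appropriate ``In addition'' clause. I would therefore keep the proof to one or two sentences, citing Theorem~\ref{thm:produkt_k_faktorjev} for the adjacent pair in $H_1 \boxtimes H_2$ and then invoking the relevant strengthened bound.
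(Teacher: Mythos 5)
Your proof is correct and takes essentially the same route as the paper: set $H = H_1 \boxtimes H_2$, use Theorem~\ref{thm:produkt_k_faktorjev} to get $m(H,2)=2$ via a percolating pair of adjacent vertices, and invoke the ``In addition'' clause of the $r=5$ result. If anything, your final citation is the more precise one, since the paper loosely attributes the conclusion to Theorem~\ref{thm: implikacija za m(G krat H, 4)} when the $r=5$ statement actually needed is its Corollary~\ref{cor:implikacija za m(G krat H,5)}, exactly as you flagged.
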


%%%%%%%%%%%%%%%%%%%%%%%%%%%%%%%%%
%%%%%%%%%%%%%%%%%%%%%%%%%%%%
\section{Strong products of infinite paths}
\label{sec:infinite}
%%%%%%%%%%%%%%%%%%%%%%%%%%%%%%
%%%%%%%%%%%%%%%%%%%%%%%%%%%%%%%%%%

In this section, we extend the concept of the $r$-neighbor bootstrap percolation to infinite graphs. In particular, we consider strong products of two-way infinite paths. In what follows, we extend the consideration of the $r$-neighbor bootstrap percolation also to the trivial case when $r=1$. Clearly, if $G$ is connected, then $m(G,1)=1$. 

Given an infinite graph $G$ and a positive integer $r$ we let $m(G,r)=\ell < \infty$ if $S$, where $|S|=\ell$, is a minimum set of vertices in $G$ that are initially set as infected, and an arbitrary vertex in $G$ becomes infected by the $r$-neighbor bootstrap percolation process in a finite number of steps. Otherwise, if there is no such finite set $S$, we let $m(G,r)= \infty$. 
By $${\rm fpt}(G) = \sup\{r: \, m(G,r) <\infty \}$$ we define the {\em finiteness percolation threshold} of a graph $G$. 

Clearly, if $G$ is the complete (infinite) graph, then ${\rm fpt}(G)=\infty$. In addition, ${\rm fpt}(G)=\infty$ is true for any finite graph $G$, since $m(G,r)\le |V(G)|$ holds for any finite graph and any positive integer $r$. On the other hand, it is easy to see that ${\rm fpt}(\mathbb{Z})=1$, where $\mathbb{Z}$ is the two-way infinite path. 
As usual, let $\mathbb{Z}^{\boxtimes,n} = \mathbb{Z} \boxtimes \cdots \boxtimes \mathbb{Z}$ stand for the strong product of $n$ two-way infinite paths. We simplify the notation $\mathbb{Z}^{\boxtimes,n}$ to $\mathbb{Z}^n$. We wish to determine ${\rm fpt}(\mathbb{Z}^n)$ for every $n \in \mathbb{N}$, and we can use some results from previous sections to bound this value. The following result follows immediately from Theorem~\ref{thm:veliko_faktorjev_vsi_3}.

\begin{cor}
\label{cor:Zn-lower}
For every $n \in \mathbb{N}$, ${\rm fpt}(\mathbb{Z}^n) \geq 2^n-1$. 
\end{cor}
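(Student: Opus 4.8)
The plan is to deduce Corollary~\ref{cor:Zn-lower} as an immediate consequence of Theorem~\ref{thm:veliko_faktorjev_vsi_3}, applied to the strong product of $n$ copies of the two-way infinite path $\mathbb{Z}$. The key observation is that $\mathbb{Z}$ is a connected graph with infinitely many vertices, so in particular $|V(\mathbb{Z})|\ge 3$, and hence the infinite graph $\mathbb{Z}^n=\mathbb{Z}\boxtimes\cdots\boxtimes\mathbb{Z}$ satisfies the hypotheses of Theorem~\ref{thm:veliko_faktorjev_vsi_3}: all $n$ factors have order at least $3$. The only subtlety is that Theorem~\ref{thm:veliko_faktorjev_vsi_3} was stated for strong products in general, and here we need to confirm that its proof goes through verbatim for infinite factors.

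First I would recall, as noted in the paragraph preceding Corollary~\ref{cor:Zn-lower}, that the percolation process and its analysis in the proof of Theorem~\ref{thm:veliko_faktorjev_vsi_3} are purely local: the percolating set $S$ defined in~\eqref{eq:2} has finite size $3^{k-1}-k$ (taking $k=n$), and the infection spreads through the strong product one coordinate at a time, each time using that a factor $G_i$ is connected and contains a path $P^{(i)}:v_1^{(i)}v_2^{(i)}v_3^{(i)}$ on three vertices. Since $\mathbb{Z}$ is connected and contains such paths (indeed any three consecutive integers form one), the inductive spreading argument applies without change. Thus for $r=2^n-1$ the finite set $S$ of size $3^{n-1}-n$ percolates in $\mathbb{Z}^n$, meaning every vertex of $\mathbb{Z}^n$ becomes infected after finitely many steps.

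Therefore $m(\mathbb{Z}^n,2^n-1)\le 3^{n-1}-n<\infty$. By the definition of the finiteness percolation threshold, this gives $2^n-1\in\{r:\,m(\mathbb{Z}^n,r)<\infty\}$, and since $m(G,r)$ is non-increasing in $r$ (a percolating set for threshold $r$ also percolates for any smaller threshold), every $r\le 2^n-1$ also yields a finite percolation number. Hence ${\rm fpt}(\mathbb{Z}^n)=\sup\{r:\,m(\mathbb{Z}^n,r)<\infty\}\ge 2^n-1$, as claimed.

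I do not expect any genuine obstacle here, since the statement is flagged in the text as following immediately from Theorem~\ref{thm:veliko_faktorjev_vsi_3}. The only point requiring a word of care is the transfer of a theorem proved (in its statement) for general strong products to the infinite setting; this is handled by the remark that the original definition and the spreading argument work verbatim for infinite graphs, the single modification being the convention that a percolating set of infinite size corresponds to $m=\infty$. Since our set $S$ is finite and does percolate, this convention is not even invoked, and the bound follows directly.
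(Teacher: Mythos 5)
Your proof is correct and takes essentially the same route as the paper, which presents this corollary as an immediate consequence of Theorem~\ref{thm:veliko_faktorjev_vsi_3} combined with the remark (made in Section~\ref{sec:org}) that the finite percolating set and the local spreading argument transfer verbatim to infinite factors. Two small points, the first of which the paper itself glosses over: Theorem~\ref{thm:veliko_faktorjev_vsi_3} is stated only for $k\geq 4$, so for $n\in\{2,3\}$ one should instead invoke Theorem~\ref{thm:veliko_faktorjev_vsaj_dva_3} (giving $m(\mathbb{Z}^2,3)=3$) and Theorem~\ref{thm:veliko_faktorjev_vsaj_trije_3} (giving $m(\mathbb{Z}^3,7)\leq 7$), with $n=1$ trivial; and $m(G,\cdot)$ is non-decreasing, not non-increasing, in $r$ --- though that monotonicity step is unnecessary anyway, since $m(\mathbb{Z}^n,2^n-1)<\infty$ already places $2^n-1$ in the set whose supremum defines ${\rm fpt}(\mathbb{Z}^n)$.
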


The following upper bound for the threshold of $\mathbb{Z}^n$ comes with an easy proof. 

\begin{prop}\label{prop:zgornja_meja_threshold}
\label{prp:Zn-upper}
For every $n \in \mathbb{N}$, ${\rm fpt}(\mathbb{Z}^n) \leq 3^{n-1}$. 
\end{prop}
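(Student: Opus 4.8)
The plan is to prove directly that $m(\mathbb{Z}^n,3^{n-1}+1)=\infty$, which already yields the proposition. Indeed, $m(\mathbb{Z}^n,\cdot)$ is non-decreasing in $r$: a set that percolates under threshold $r$ also percolates under any smaller threshold, since lowering the threshold can only enlarge each $A_t$ (by induction $A_t$ for $r'\le r$ contains $A_t$ for $r$). Hence if $m(\mathbb{Z}^n,3^{n-1}+1)=\infty$, then $m(\mathbb{Z}^n,r)=\infty$ for every $r\ge 3^{n-1}+1$, so $\{r:\, m(\mathbb{Z}^n,r)<\infty\}\subseteq\{1,\ldots,3^{n-1}\}$, and therefore ${\rm fpt}(\mathbb{Z}^n)\le 3^{n-1}$.

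So I would fix an arbitrary finite initial set $A_0\subseteq V(\mathbb{Z}^n)$, run the $(3^{n-1}+1)$-neighbor process, and argue that the infected region never leaves the smallest axis-parallel box occupied by $A_0$. For each coordinate $j\in[n]$ set $a_j=\min\{x_j:\, x\in A_0\}$ and $b_j=\max\{x_j:\, x\in A_0\}$, so $A_0\subseteq B:=\prod_{j=1}^n\{a_j,\ldots,b_j\}$. The core computation is the following neighbor count: a vertex $v=(v_1,\ldots,v_n)$ with $v_j=b_j+1$ has exactly $3^{n-1}$ neighbors in the hyperplane $\{x_j=b_j\}$, obtained by fixing the $j$-th coordinate to $b_j$ and independently shifting each of the remaining $n-1$ coordinates by $-1$, $0$, or $+1$ (all of these are adjacent to $v$ because every coordinate then differs from $v$'s by at most $1$). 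Every other neighbor of $v$ has $j$-th coordinate $b_j+1$ or $b_j+2$.

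With this in hand the confinement follows from a first-escape argument: suppose some vertex with $j$-th coordinate exceeding $b_j$ eventually becomes infected, and let $t$ be the first time this happens, witnessed by a newly infected vertex $v$. At time $t-1$ every infected vertex has $j$-th coordinate at most $b_j$, so each infected neighbor of $v$ has $j$-th coordinate equal to $b_j$; in particular $v_j=b_j+1$, and by the count above $v$ has at most $3^{n-1}<3^{n-1}+1$ infected neighbors, contradicting that it got infected. Thus the $j$-th coordinate of the infected set never exceeds $b_j$, and symmetrically never drops below $a_j$. Applying this to all $j$ shows the infected set stays inside the finite box $B$, hence never equals the infinite graph $\mathbb{Z}^n$; as $A_0$ was an arbitrary finite set, $m(\mathbb{Z}^n,3^{n-1}+1)=\infty$.

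The only place that needs care — and the sole real obstacle in this otherwise short argument — is verifying that the neighbor count is exactly $3^{n-1}$ and, crucially, that this is genuinely the largest number of infected neighbors available to an escaping vertex at the decisive moment. This hinges on two facts about the strong product of paths: adjacency is the $\ell^\infty$-distance-one relation, so a vertex one step outside a face sees only the single adjacent interior layer, and that layer contributes one neighbor for each of the $3^{n-1}$ sign patterns in the free coordinates. Since $3^{n-1}$ is precisely the threshold minus one, the box cannot grow, giving the upper bound that matches the lower bound ${\rm fpt}(\mathbb{Z}^n)\ge 2^n-1$ of Corollary~\ref{cor:Zn-lower}.
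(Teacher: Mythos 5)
Your proof is correct and follows essentially the same route as the paper's: both confine the initial infected set to a finite axis-parallel box and observe that a vertex just outside a face of the box has at most $3^{n-1}$ neighbors inside it, so under threshold $3^{n-1}+1$ the infection can never escape the box. You merely make explicit two points the paper leaves implicit — the monotonicity of $m(\mathbb{Z}^n,r)$ in $r$ and the first-escape argument — which is a sound (and slightly more careful) write-up of the same idea.
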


\begin{proof}
Let $G=\mathbb{Z}^n$ for some $n \in \mathbb{N}$ and assume that $m(G,3^{n-1}+1)< \infty$. Let $S$ be the percolating set of $G$. Without loss of generality and possibly by using a linear translation of $S$, we may assume that $S \subseteq [k]^n$ for some positive integer $k$. Let $x \in V(G) \setminus [k]^n$. Then $x$ has at most $3^{n-1}$ neighbors in $[k]^n$, therefore the infection cannot spread out from the box $[k]^n$, a contradiction. Thus, ${\rm fpt}(\mathbb{Z}^n) \leq 3^{n-1}$.  \qed
\end{proof}

Combining Corollary~\ref{cor:Zn-lower} with Proposition~\ref{prp:Zn-upper} we get the exact value of the finiteness percolation threshold of the strong grid $\mathbb{Z}^2$. 

\begin{cor}
${\rm fpt}(\mathbb{Z}^2)=3$. 
\end{cor}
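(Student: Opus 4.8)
The plan is to combine the two general bounds derived immediately above and observe that they coincide precisely at $n=2$. Specifically, I would instantiate Corollary~\ref{cor:Zn-lower} at $n=2$ to obtain ${\rm fpt}(\mathbb{Z}^2)\geq 2^2-1=3$, and instantiate Proposition~\ref{prp:Zn-upper} at $n=2$ to obtain ${\rm fpt}(\mathbb{Z}^2)\leq 3^{2-1}=3$. Squeezing these two inequalities together yields ${\rm fpt}(\mathbb{Z}^2)=3$, which constitutes the entire argument.

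There is no genuine obstacle left to overcome at this point, since all the substance has already been absorbed into the two cited statements: the lower bound comes from exhibiting a finite percolating set at threshold $r=3$ (via Theorem~\ref{thm:veliko_faktorjev_vsi_3}), while the upper bound comes from the box-confinement observation of Proposition~\ref{prp:Zn-upper}, namely that at threshold $3^{n-1}+1$ no vertex lying outside a finite box $[k]^n$ can acquire enough infected neighbors for the infection to escape. The one point worth flagging is the arithmetic coincidence that makes the corollary work: the lower bound $2^n-1$ and the upper bound $3^{n-1}$ are equal exactly when $n=2$ (both equal $3$), whereas for $n\geq 3$ one has the strict inequality $2^n-1<3^{n-1}$, so the same pinching trick leaves a genuine gap and cannot by itself determine ${\rm fpt}(\mathbb{Z}^n)$. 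Hence the proof of this corollary is essentially a one-line specialization, and its interest lies precisely in the fact that two independently obtained bounds happen to meet for the strong grid $\mathbb{Z}^2$.
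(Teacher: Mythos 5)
Your proof is correct and is exactly the paper's own argument: the paper also obtains ${\rm fpt}(\mathbb{Z}^2)=3$ by combining the lower bound $2^n-1$ of Corollary~\ref{cor:Zn-lower} with the upper bound $3^{n-1}$ of Proposition~\ref{prp:Zn-upper} at $n=2$. Your remark that the two bounds coincide only for $n=2$ matches the paper's subsequent discussion, where $n=3$ requires a separate, more involved construction.
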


The following result gives the finiteness percolation threshold of $\mathbb{Z}^3$, which needs more effort.

\begin{thm}
${\rm fpt}(\mathbb{Z}^3)=9$. 
\end{thm}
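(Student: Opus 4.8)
The plan is to pin the value down using the two bounds already available: Corollary~\ref{cor:Zn-lower} gives ${\rm fpt}(\mathbb{Z}^3)\ge 2^3-1=7$, while Proposition~\ref{prp:Zn-upper} gives ${\rm fpt}(\mathbb{Z}^3)\le 3^{3-1}=9$ (indeed $m(\mathbb{Z}^3,10)=\infty$). Thus it remains only to push the lower bound from $7$ to $9$, i.e.\ to show that $m(\mathbb{Z}^3,9)<\infty$. Since a smaller threshold only makes percolation easier, once a single finite set percolates under the $9$-neighbor rule we get $m(\mathbb{Z}^3,r)<\infty$ for every $r\le 9$, and together with $m(\mathbb{Z}^3,10)=\infty$ this forces ${\rm fpt}(\mathbb{Z}^3)=9$. (Theorem~\ref{thm:veliko_faktorjev_vsi_3} already yields $m(\mathbb{Z}^3,7)\le 6$, so the genuinely new content is the threshold $r=9$.)

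The mechanism I would rely on is a \emph{plane cascade}: if an entire $\mathbb{Z}^2$-layer $\{z=c\}$ is infected, then every vertex of a neighbouring layer $\{z=c\pm 1\}$ has exactly $3\cdot 3=9$ infected neighbours in it, so all parallel layers become infected one after another. Consequently it suffices to produce a finite seed whose infected region grows without bound (equivalently, reaches every vertex in finitely many steps). Here one must be careful: a naive solid box does \emph{not} work. A flat face of an infected box does advance, since a vertex just beyond a full $3\times 3$ block sees exactly $9$ infected neighbours; but the would-be new corner $(N+1,N+1,N+1)$ of the box $[-N,N]^3$ has only $2^3-1=7<9$ already-infected neighbours inside $[-(N+1),N+1]^3$, so the corners stall and the box ``cones off''. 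I would therefore take a seed whose cross-sections are \emph{strictly nested} (a large octahedral/pyramidal block), so that along every axis the deeper layers stay a step wider than the outer ones and can keep refilling the lagging edges and corners of the front.

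Finally I would formalise the growth by a monotone invariant for the advancing front --- recording how far it has reached in each of the six axis directions together with the bounded ``rounding'' of its edges and corners --- and prove by induction on time that the front \emph{advances on every step}: face interiors are infected immediately (nine neighbours in the adjacent full layer), while face edges and then corners are refilled a bounded number of steps behind, using precisely the extra infected neighbours supplied by the wider inner layers. The main obstacle is exactly this edge/corner bookkeeping at the borderline threshold $9=3^2$, where the counts are tight: one must verify that the nesting property is preserved under the dynamics so that the front never stalls. Once unbounded growth is secured, every vertex is infected in finite time, whence $m(\mathbb{Z}^3,9)<\infty$ and, combined with the upper bound, ${\rm fpt}(\mathbb{Z}^3)=9$.
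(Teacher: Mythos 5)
Your overall frame matches the paper's: take the upper bound ${\rm fpt}(\mathbb{Z}^3)\le 9$ from Proposition~\ref{prp:Zn-upper}, and reduce the theorem to exhibiting one finite set that percolates under the $9$-neighbor rule. But the core of that task is never carried out, and the structural claim you use to guide it is wrong. You assert that a solid box cannot work because the would-be corner $(N+1,N+1,N+1)$ has only $7<9$ neighbours inside $[-(N+1),N+1]^3$, ``so the corners stall and the box cones off.'' The premise is true, but the conclusion does not follow: nothing forces the infection to fill the enlarged box synchronously, using only vertices of that box. In fact the paper's percolating set \emph{is} a solid box, $S=[5]^3$, and the mechanism is exactly what your count excludes: all six faces sprout simultaneously, and the corner vertices of each new face are infected a few steps later with the help of newly infected vertices on the \emph{adjacent} sprouted faces. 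In the paper's notation, the face interior $\{2,3,4\}\times\{2,3,4\}\times\{6\}$ sees $9$ infected neighbours at once, the remaining non-corner vertices of the face reach count $9$ in one or two further steps, and a face corner such as $(5,1,6)$ reaches $9=4+3+2$ using the two vertices $(4,0,5)$ and $(6,2,5)$, which lie \emph{outside} the target box $[5]\times[5]\times[6]$. This proves that any box with all dimensions at least $5$ infects each of its six one-layer extensions; iterating (every vertex of $\mathbb{Z}^3$ lies in some box obtainable from $[5]^3$ by such extensions) gives $m(\mathbb{Z}^3,9)\le 125<\infty$. The three-dimensional corner you worry about is indeed never infected while the process is confined to $[-(N+1),N+1]^3$; it is simply infected later, after the infection has overshot the adjacent faces, which is permitted and is what actually happens.

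Because of this misdiagnosis, your replacement plan (an octahedral/pyramidal seed with strictly nested cross-sections, governed by a monotone front invariant) is left entirely as a sketch: the assertion that ``the front advances on every step,'' i.e.\ the edge/corner bookkeeping at the tight threshold $9=3^2$, is precisely the lemma on which the whole theorem hinges, and you explicitly defer it. As it stands, the proposal therefore does not establish $m(\mathbb{Z}^3,9)<\infty$, which is the only nontrivial content of the statement. Two smaller points: your parenthetical ``equivalently, reaches every vertex in finitely many steps'' is not an equivalence (an infected region can grow without bound while missing most of $\mathbb{Z}^3$, so unbounded growth alone is insufficient), and the lower bound $2^3-1=7$ from Corollary~\ref{cor:Zn-lower} plays no role once the case $r=9$ is handled, as you yourself note.
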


\begin{proof}
From Proposition~\ref{prop:zgornja_meja_threshold} we get that ${\rm fpt}(\mathbb{Z}^3)\leq 9$. To obtain the desired equality we  need to show that $m(\mathbb{Z}^3,9)<\infty$.
Let $S=[5]^3$ and let $G=\mathbb{Z}^3$. We will prove that $S$ percolates $G$ in the $9$-neighbor bootstrap percolation process. For this purpose we will first show that vertices in $[5]\times [5] \times [6]$ get infected. %First note, that the symmetry of $\mathbb{Z}$ implies, that whatever holds for vertices with indices $4$, $5$ or $6$ also holds for indices $2$, $1$ or $0$ respectively. Also note that because of the symmetry of $G$, whenever a vertex $x$ gets infected, then every permutaion of $x$ also gets infected.

Firstly note that vertices in $\{2,3,4\}\times \{2,3,4\}\times \{6\}$ have $9$ neighbors in $S$. Namely, any vertex $(x_1,x_2,6)$ where $x_i \in \{2,3,4\}$ for $i \in [2]$ is adjacent to $(y_1,y_2,5)$, where $y_i \in \{x_i-1,x_i,x_i+1\}$ for $i \in [2]$. 
Secondly, vertex $x =(1,3,6)$ has $6$ neighbors in $S$, namely $(y_1,y_2,5)$ where $y_1 \in \{1,2\}$ and $y_2 \in \{2,3,4\}$. Vertex $x$ is also adjacent to vertices $(2,y_2,6)$ where $y_2 \in \{2,3,4\}$, which were infected the first step. Thus, $x$ has $9$ infected neighbors and gets infected. By using symmetric arguments, vertices $(3,1,6),(5,3,6)$ and $(3,5,6)$ also get infected. 
Thirdly, vertex $x=(1,2,6)$ is adjacent to $(y_1,y_2,5)$, where $y_1 \in \{1,2\}$ and $y_2 \in \{1,2,3\}$, which are all in $S$. Vertex $x$ is also adjacent to vertices $(2,y_2,6)$ for $y_2 \in \{2,3\}$ and also to $(1,3,6)$, all of which were infected in previous steps. Since $x$ has 9 infected neighbors, it gets infected. By symmetry, vertices $(1,4,6),(5,2,6),(5,4,6),(2,1,6)$, $(2,5,6),(4,1,6)$ and $(4,5,6)$ also get infected. 

So far, with the exception of corner vertices, that is, vertices $(1,1,6),(5,1,6),(2,5,6)$ and $(5,5,6)$, all other vertices in $[5]\times[5]\times \{6\}$ have been infected. Again by using symmetry we infer that with the exception of corner vertices all other vertices of $[5]\times[5]\times \{0,6\}$, $[5]\times\{0,6\}\times[5]$ and $\{0,6\}\times[5]\times[5]$ become infected. More precisely, the corner vertices that have not yet been infected are of the form $x=(x_1,x_2,x_3)$, where $x_i \in \{0,6\}$ for an $i \in [3]$ and $x_j \in \{1,5\}$ for all $j \neq i$. Now, without loss of generality, consider the corner vertex $x=(5,1,6)$. Note that $x$ is adjacent to vertices $(4,1,6),(4,2,6),(5,2,6),(4,1,5),(4,2,5),(5,1,5)$ and $(5,2,5)$ as well as to $(4,0,5)$ and $(6,2,5)$, all of which have been infected. By symmetric arguments we infer that the other three corner vertices of $[5]\times[5]\times [6]$ become infected, by which all vertices of $[5]\times[5]\times [6]$ are infected.

Now, by repeating this infection process, we can eventually infect any vertex in $[5]\times [5] \times \{-k,-k+1,\ldots , k-1,k\}$ for any $k \in \mathbb{Z}$. Finally, similarly as in the proof of Theorem~\ref{thm:veliko_faktorjev_vsi_3},   reversing the roles of factors,  we deduce that eventually every vertex in $\{-k,-k+1,\ldots , k-1,k\}^3$ becomes infected, therefore $S$ percolates. \qed
\end{proof}

\section{Concluding remarks}
\label{sec:conclude}
%%%%%%%%%%%%%%%%%%%%%%%%%%%%%%%%%%%%%%%%%%%%%%%%%%%%%%%%%%%%%%%%%%%%%%%%%%%%%%%%%%%%%%%%%%%%%%%%%%%%%%%%%%%%%%%%%%%%%%%%%%%%%%%%%%%%%%%%%%%%%%%%
In this paper, we considered the $r$-neighbor bootstrap percolation of a strong product of $k$ graphs obtaining or bounding the values of $m(G_1\boxtimes \cdots\boxtimes G_k,r)$. The results are divided into several cases, in which $r$ can be expressed as a function of $k$. In the basic case, where $k\ge 2$ and $r\le 2^{k-1}$, we show $m(G_1\boxtimes \cdots\boxtimes G_k,r)=r$, by which we generalize the result~\cite[Theorem 3.1]{coe-2019} due to Coelho et al. This case is improved when there are at least two non-edge factors in the strong product. More precisely, when there are two non-edge factors and $r\le 3\cdot 2^{k-2}$, we get $m(G_1\boxtimes \cdots\boxtimes G_k,r)=r$, while for three non-edge factors and $r\le 7\cdot 2^{k-3}$, we get the upper bound $m(G_1\boxtimes \cdots\boxtimes G_k,r)\le 7\cdot 2^{k-3}$. The following question is thus natural.
 
\begin{q}
Let $G$ be the strong product $G_1\boxtimes \cdots\boxtimes G_k$ of connected graphs $G_1,\ldots, G_k$, among which there are $\ell$ non-edge factors. For which $\ell$, where $\ell\in \{3,\ldots, k\}$, it holds that  
$$m(G_1\boxtimes \cdots\boxtimes G_k,r)\le (2^{\ell}-1)\cdot 2^{k-\ell},$$
where $r\le (2^{\ell}-1)\cdot 2^{k-\ell}$\,? 
\end{q}
It is likely that in the answer to the question above, $\ell$ is expressed as a function of $k$. Clearly, when $\ell =3$ the answer is positive by Theorem~\ref{thm:veliko_faktorjev_vsaj_trije_3}.
In fact, when $\ell=3$ we suspect that the upper bound can be improved so that the equality 
$m(G_1\boxtimes \cdots\boxtimes G_k,r)=r$ holds for all $r\le 7\cdot 2^{k-3}$. We extend this into the following question.

\begin{q}
\label{ques2}
Let $G$ be the strong product $G_1\boxtimes \cdots\boxtimes G_k$ of connected graphs $G_1,\ldots, G_k$, among which there are $\ell$ non-edge factors. For which $\ell$, where $\ell\in \{3,\ldots, ,k\}$, it holds that  
$$m(G_1\boxtimes \cdots\boxtimes G_k,r)=r,$$
where $r\le (2^{\ell}-1)\cdot 2^{k-\ell}$\,? 
\end{q}
Note that the positive answer to Question~\ref{ques2} when $\ell=k$ would mean a considerable improvement of Theorem~\ref{thm:veliko_faktorjev_vsi_3}, which seems unlikely. 

%However, the Table in Section~\ref{sec:org} could be improved by finding similar results for graphs with $\ell$ non-edge factors for $ 4\leq \ell \leq k-1$. Furthermore Theorems~\ref{thm:veliko_faktorjev_vsaj_trije_3} and~\ref{thm:veliko_faktorjev_vsi_3} could be improved, as they provide only the upper bound. Further generalisations from Coelho~\cite{coe-2019} regarding other types of products could also be possible. 

In the case of two factors (that is, $k=2$) the situation has been resolved for $r\in\{2,3\}$, and in part also for $r\in\{4,5\}$. It would be interesting to find a generalization of Theorem~\ref{thm: implikacija za m(G krat H, 4)} in which $m(G\boxtimes H,r)$ would depend on the values of $m(G,r_1)$ and $m(H,r_2)$ for some $r_1$ and $r_2$, which are smaller than $r$. 

The main open problem arising in Section~\ref{sec:infinite} is to determine ${\rm fpt}(\mathbb{Z}^n)$ for all $n\geq 4$. In particular, the following question is also open:

\begin{q}
\label{ques3}
For which $n\ge 2$, we have ${\rm fpt}(\mathbb{Z}^n)=3^{n-1}$?
\end{q}
Clearly, the above question has a positive answer for $n\in\{2,3\}$. We suspect that as $n\to \infty$, ${\rm fpt}(\mathbb{Z}^n)<3^{n-1}$. 

It would also be interesting to consider the $r$-neighbor bootstrap percolation in other classes of infinite graphs.

\section*{Acknowledgement}
B.B. was supported by the Slovenian Research Agency (ARRS) under the grants P1-0297, J1-2452, J1-3002, and J1-4008.

\end{document}